\newtheorem{conj}{Conjecture}
\newtheorem{thm}[conj]{\bf Theorem}
\newtheorem{cor}[conj]{\bf Corollary}
\newtheorem{lemma}[conj]{\bf Lemma}
\newtheorem{rem}[conj]{\bf Remark}
\newtheorem{cond}{\sc Condition}
\def\saveenum{\xdef\@savedenum{\the\c@enumi\relax}}
\def\resetenum{\global\c@enumi\@savedenum}
\def\bar{\overline}
\def\eps{\varepsilon}
\def\Cc{\mbox{$\mathcal C$}}
\def\Ac{{\mathcal A}}
\def\Dc{\mbox{$\mathcal D$}}
\def\Hc{\mbox{$\mathcal H$}}
\def\Ec{\mbox{$\mathcal E$}}
\def\Gc{\mbox{$\mathcal G$}}
\def\Nc{\mbox{$\mathcal N$}}
\def\Tc{\mbox{$\mathcal T$}}
\def\Bb{{\mathbb B}}
\def\Cb{{\mathbb C}}
\def\Rb{{\mathbb R}}
\def\Db{{\mathbb D}}
\def\Eb{{\mathbb E}}
\def\Pb{{\mathbb P}}
\def\Vb{{\mathbb V}}
\def\contiguous{\triangleleft\kern-.20em\triangleright}
\def\weakly{\buildrel {w} \over \longrightarrow}
\def\0{{\mathbf 0} }
\def\1{{\mathbf 1} }
\def\x{ {\bf x}}
\def\a{ {\bf a}}
\def\b{ {\bf b}}
\def\e{ {\bf e}}
\def\y{ {\bf y}}
\def\z{ {\bf z}}
\def\u{ {\bf u}}
\def\v{ {\bf v}}
\def\X{ {\bf X}}
\def\Z{ {\bf Z}}
\def\U{ {\bf U}}
\def\W{ {\bf W}}
\def\mds{\medskip}
\newcommand{\var}{{\mathrm Var}}
\date{\today}
\author{
Alexis Derumigny\thanks{University of Twente, 5 Drienerlolaan, 7522 NB Enschede, Netherlands. a.f.f.derumigny@utwente.nl.}, 
Jean-David Fermanian\thanks{Ensae, 5, avenue Henry Le Chatelier, 91764 Palaiseau cedex, France. jean-david.fermanian@ensae.fr.
}}
\title{Conditional empirical copula processes and generalized dependence measures}
\begin{document}

\maketitle

\begin{abstract}
    We study the weak convergence of conditional empirical copula processes, when the conditioning event has a nonzero probability. The validity of several bootstrap schemes is stated, including the exchangeable bootstrap. We define general - possibly conditional -  multivariate dependence measures and their estimators. By applying our theoretical results, we prove the asymptotic normality of some estimators of such dependence measures.
\end{abstract}

\mds

{\bf Keywords:} empirical copula process, conditional copula, weak convergence, bootstrap.

\mds

{\bf MCS 2020:} Primary: 62G05, 62G30; Secondary: 62H20, 62G09.

\mds

\section{Introduction}\label{sect-Introduction}

Since their formal introduction by Patton in~\cite{patton2006a,patton2006b}, conditional copulas have become key tools to describe the dependence function between the components of a random vector $\X:=(X_1,\ldots,X_p)\in \Rb^p$, given that a random vector of covariates $\Z:=(Z_1,\ldots,Z_q)\in \Rb^q$ is available. This concept, generalized in~\cite{fermanian2012}, may be stated as an extension of the famous Sklar's theorem: for every borelian subset $A\subset \Rb^q$ and every vectors $\x\in \Rb^p$, the conditional joint law of $\X$ given $(\Z\in A)$ is written
\begin{equation}
    F(\x|A) := \Pb \big(\X\leq \x | \Z\in A \big)= C_{\X|\Z}\big( \Pb(X_1\leq x_1 | \Z\in A),\ldots,\Pb(X_p\leq x_p | \Z\in A)
    \, \big|\, \Z\in A   \big),
    \label{def_cond_cop}
\end{equation}
for some random map $C_{\X|\Z}(\cdot | \Z\in A):[0,1]^p \rightarrow [0,1]$ that is a copula (denoted as $C(\cdot | A)$ hereafter to be short).
Note that we have denoted inequalities componentwise. This will be our convention hereafter.

\mds

Now, Patton's seminal paper~\cite{patton2006a} has been referenced more than $2\,000$ times in the academic literature.
 The concept of conditional copulas (also sometimes called ``dynamic copulas'' or ``time-varying copulas'') has been applied in many fields: economics (\cite{patton2012},\cite{manner2012}), financial econometrics (\cite{jondeau2006},\cite{patton2009},\cite{christoffersen2012}), risk management (\cite{palaro2006},\cite{OhPtton2018}), agriculture (\cite{goodwin2015}), actuarial science (\cite{brechmann2013},\cite{fangmadsen2013} and~\cite{czado2019} more recently), hydrology (\cite{kim2016},\cite{hesami2016}), etc, among many others.
The rise of pair-copula constructions, particularly vine models (\cite{aas},\cite{bedfordcooke2001,bedfordcooke2002}) has fuelled the interest around conditional copulas. Indeed, generally speaking, any $p$-dimensional distribution can be described by $p(p-1)/2$ bivariate conditional copulas and $p$ margins. Even if most vine models assume that such conditional copulas are usual copulas (the so-called "simplifying assumption"; c.f.~\cite{hobaek2010,gijbels2016,DerumignyFermanian2017} and the references therein), there is here no consensus. Therefore, some recent papers propose some model specification for vines and the associated inference procedures by working directly on conditional copulas: see~\cite{schellhase2016},\cite{spanhel2017},\cite{kurzspanhel2017},\cite{SpanhelKurz2019}, e.g.

\mds

Moreover, the statistical theory of conditional copulas is currently an active research topic. In the literature, the conditioning subset $A$ in~(\ref{def_cond_cop}) is pointwise most often, i.e. the authors consider $A:= \{\Z=\z\}$ for some particular vector $\z\in \Rb^q$.
Typically, in a semi-parametric model, it is assumed that $C_{\X|\Z}(\x | \Z=\z)= C_{\theta(\z)}(\x)$ for some map $\z\mapsto \theta(\z)\in \Rb^m$ and the main goal is to statistically estimate the latter link function, as in~\cite{acar2011,acar2013,Abegaz,vatter2015}. Under a nonparametric point-of-view, the main quantity of interest is rather the empirical copula process given $(\Z=\z)$. For instance,~\cite{veraverbeke2011,gijbels2015,portiersegers2018} study the weak convergence of such a process.

\mds

To the best of our knowledge, almost all the papers in the literature until now have focused on pointwise conditioning events.
In a few papers, some box-type conditioning events as $A:=\prod_{k=1}^q \1\big(Z_k\in (a_k,b_k)\big)$ are considered, where $(a_k,b_k)\in \bar \Rb^2$ for every $k\in \{1,\ldots q\}$.
For example,~\cite{SchmidConditionalSpearman}, p.1127, discusses a Spearman's rho between two random variables $X_1$ and $X_2$, knowing that $X_1$ and/or $X_2$ is above (or below) some threshold. Nonetheless, the limiting law of such a quantity is not derived.
In the same spirit,~\cite{DuranteJaworski2010} estimate similar quantities for measuring contagions between two markets, but they do not yield their asymptotic variances.
They wrote that ``this variance is usually difficult to get in a closed form and can be estimated by means of a bootstrap procedure''. See~\cite{DurantePappada} too. \textcolor{black}{Indeed, the limiting law of such statistics cannot be easily deduced from the asymptotic behavior of the usual empirical copula process, and necessitate particular analysis (see below). The aim of our paper is to state general theoretical results to solve such problems.}

\mds
Actually, such box-type conditioning events provide a natural framework in many situations. 
\textcolor{black}{For instance, it is often of interest to measure and monitor conditional dependence measures between the components of $\X$ 
given $\Z$ belongs to some particular areas in $\Rb^q$, through a model-free approach.
Therefore, bank stress tests will focus on $A:=(Z_k > q^Z_k, k\in \{1,\ldots,q\})$ 
for some quantiles $q^Z_k$ of $Z_k$. 
Since the levels of the latter quantiles are often high, it is no longer possible to rely on marginal or joint estimators given pointwise conditioning events  (kernel smoothing, e.g.). This justifies the bucketing of $\Z$ values.      
Moreover, when dealing with high-dimensional vectors of covariates, discretizing the $\Z$-space is often the single feasible way of measuring conditional dependencies. Indeed, it is no longer possible to invoke usual nonparametric estimators, due to the usual curse of dimensionality.
Since dependence measures are functions of the underlying copula, the key theoretical object will be here the conditional copula $C(\cdot| A)$ of $\X$ given $(\Z\in A)$ for some borelian subsets $A$, and some of its nonparametric estimators.
}


\mds


The goal of this paper is threefold.
First, in Section~\ref{weak_convergence_section}, we state the weak convergence of the empirical copula process indexed by borelian subsets under minimal assumptions, extending~\cite{segers} written for usual copulas.
Second, we prove the validity of the exchangeable bootstrap scheme for the latter process in Section~\ref{bootstrap_approx}. This provides an alternative to the usual nonparametric Efron's bootstrap (\cite{FermaRaduWegkamp}) and the multiplier bootstrap~\cite{remillard2009} for bootstrapping copula models. Third, Section~\ref{dependence_measures} introduces a family of general ``conditional'' dependence measures as mappings of the latter copulas. This family virtually includes and generalizes
all dependence measures that have been introduced until now. We apply our theoretical results to prove their asymptotic normality. We state our results with independent and identically variables, leaving aside the extensions to dependent data for further studies.
It is important to note that our results obviously include the particular case of no covariate/conditioning event. 
Therefore, we contribute to the literature on usual copulas as much as on conditional copulas.
Finally, Section~\ref{financial_returns} provides an empirical application of the latter tools to study 
conditional dependencies between stock returns.

\section{Weak convergence of empirical copula processes indexed by families of subsets}
\label{weak_convergence_section}
\subsection{Single conditioning subset}
\label{weak_convergence_section_single}

Let us consider a borelian subset $A\subset \Rb^{q}$ so that $p_{A}:= \Pb(\Z\in A)$ is positive.
Let $\big((\X_1,\Z_1),\ldots,(\X_n,\Z_n)\big)$ be an i.i.d. sample of realizations of $(\X,\Z)\in \Rb^{p+q}$.
The conditional copula of $\X$ given $(\Z\in A)$, that will simply be denoted by $C(\cdot | A)$, can be estimated by
$$ \hat C_n (\u|A):=\frac{1}{n \hat p_{A}} \sum_{i=1}^n \1 \big(F_{n,1}(X_{i,1} | A) \leq u_1,\ldots, F_{n,p}(X_{i,p} | A)\leq u_p, \Z_i\in A  \big),\; \text{where}$$
$$ F_{n,k}(t|A):= \frac{1}{n\hat p_{A}}\sum_{i=1}^n \1(X_{i,k} \leq t , \Z_i\in A    ),\; k=\{1,\ldots,p\},$$
$$ \hat p_{A} := n^{-1}\sum_{i=1}^n \1( \Z_i\in A)=:\frac{n_A}{n} \simeq p_A.$$
Note that $n_A$ is the size of the sub-sample of the observations $\X_i$ s.t. $\Z_i\in A$. It is a random integer in $\{0,1,\ldots,n\}$.
When $n_A=0$, simply set $\hat p_A=0$ and $F_{n,k}(\cdot|A)=0$ formally.

\mds

The associated copula process is denoted as $\hat\Cb_n(\cdot|A)$, i.e. $\hat\Cb_n(\u|A):=\sqrt{n}\big( \hat C_n(\u | A) - C(\u |A) \big)$ for any $\u\in [0,1]^p$.
Equivalently, one can define the empirical copula as
$$ \bar C_n (\u|A):=\frac{1}{n \hat p_{A}} \sum_{i=1}^n \1 \big(X_{i,1} \leq F_{n,1}^{-1}(u_1|A),\ldots,
X_{i,p}\leq F_{n,p}^{-1}(u_p|A), \Z_i\in A  \big),$$
invoking usual generalized inverse functions: $F^{-1}(u):=\inf \{t\in \Rb | F(t)\geq u\}$ for every univariate distribution $F$.
The associated copula process becomes $\bar\Cb_n(\cdot|A)$, where
$$\bar\Cb_n(\u|A):=\sqrt{n}\big( \bar C_n(\u | A) - C(\u |A) \big),\; \u\in [0,1]^p.$$

\mds

We assume hereafter that the conditional margins $F_k(\cdot | \Z\in A)$ are continuous, $k\in \{1,\ldots,p\}$.
First note that the asymptotic behaviors of $\hat\Cb_n(\cdot | A) $ and $\bar \Cb_n(\cdot | A)$ are the same.
Indeed, adapting the same arguments as in~\cite{RWZ}, Appendix C, it is easy to check that
$$ \sup_{\u\in [0,1]^p} | (\hat C_n- \bar C_n)(\u | A)| \leq \frac{p}{n_A \hat p_A},$$
almost everywhere, and then
\begin{equation}
 \sup_{\u\in [0,1]^p} | \sqrt{n}(\hat C_n - C)(\u | A) - \sqrt{n}( \bar C_n - C)(\u| A)|=o_P(1),
 \label{equiv_cop_processes}
 \end{equation}
since $\hat p_A$ tends to $p_A>0$ a.s. In other words, $ (\hat\Cb_n - \bar \Cb_n)(\cdot | A)$ tends to zero in probability in $\ell^\infty([0,1]^p)$, endowed with its sup-norm.
Therefore, the weak limits of $\hat\Cb_n(\cdot|A)$ and $\bar\Cb_n(\cdot|A)$ are the same.

\mds

In this section, we state the weak convergence of $\hat\Cb_n(\cdot |A)$ and/or $\bar\Cb_n(\cdot |A)$ in $\ell^\infty ([0,1]^p)$.
For convenience and w.l.o.g., we will focus on $\bar\Cb_n(\cdot | A)$ in the next theorem.

\mds

Second, the random variable $U^A_k:=F_k(X_k| \Z\in A)$ is uniformly distributed on $[0,1]$, given $(\Z\in A)$, for every $k\in \{1,\ldots,p\}$.
We denote by $\U^A$ the unobservable random vector $(U^A_1,\ldots,U^A_p)$, or simpler $\U$ when there is no ambiguity.
For every $k\in \{1,\ldots,p\}$, the empirical distribution of the (unobservable) random variable $U^A_k$ given the event $(\Z \in A)$ is
$$ G_{n,k}(u |A)=n_A^{-1}\sum_{i=1}^n \1( U^A_{i,k} \leq u,\Z_i\in A), \; U^A_{i,k}:=F_k(X_{i,k}| \Z\in A), \; i\in \{1,\ldots,n\}.$$

Note that $G_{n,k}(u |A)$ and $F_{n,k}(t |A)$ can be seen as an average of $n_A$ indicator functions, i.e. an average on a sub-sample of observations whose size is random. Obviously, $G_{n,k}(u |A)$ tends to $\Pb( U_k^A \leq u | \Z\in A)=u$ a.e.
and its associated empirical process will be $\alpha_{n,k}(u|A):=  \sqrt{n_A}\big( G_{n,k}(u|A) - u\big)$, $u\in [0,1]$.
Note that the normalizing sample size is random here, contrary to the usual empirical processes. \textcolor{black}{Nonetheless, this will not be a source of worry for asymptotic behaviors and $n_A$ could be replaced by $n p_A$ in the definition of $\alpha_{n,k}(\cdot|A)$.}

\mds

Third, set
$$ \bar D_n(\u, A):= n^{-1} \sum_{i=1}^n \1 \big( U^A_{i,1}\leq G_{n,1}^{-1}(u_1|A),\ldots,U^A_{i,p}\leq G_{n,p}^{-1}(u_p|A), \Z_i\in A \big),$$
for any $\u\in [0,1]^p$, that tends to $D(\u, A):=\Pb( \U^A \leq \u, \Z\in A)=p_A\Pb( \U^A \leq \u | \Z\in A)$ a.s.
Note that $\big(X_{i,k}\leq F_{n,k}^{-1}(u|A)\big)$ if and only if $\big(U^A_{i,k}\leq G_{n,k}^{-1}(u |A)\big)$ for any $k\in \{1,\ldots,p\}$, $i\in \{1,\ldots,n\}$ and $u\in [0,1]$.
This implies
$$\bar C_n(\u|A)=\bar D_n(\u, A)/\hat p_A=\bar D_n(\u, A)/\bar D_n(\1, A) ,$$
and the asymptotic behavior of $\bar\Cb_n$ will be deduced from the weak convergence of the process $\bar \Db_n(\cdot,A)$, where
$\bar \Db_n(\u,A) := \sqrt{n}( \bar D_n - D)(\u,A)$.

\mds

The unfeasible empirical counterpart of $D(\u,A)$ is
$$ D_n(\u, A):= n^{-1} \sum_{i=1}^n \1 \big( \U^A_{i}\leq \u, \Z_i\in A \big).$$
A key process is $\Db_n(\cdot) := \sqrt{n}\big( D_n - D)(\cdot,A)$ that is a random map from $[0,1]^p$ to $\Rb$.
As every usual empirical process, it weakly tends in $\ell^\infty([0,1]^p)$ to a Brownian bridge.

\mds

In the meantime, define the instrumental empirical process
\begin{equation}
\widetilde \Db_n(\u,A):= \Db_n(\u,A) - p_A^{-1}\sum_{k=1}^p
\partial_k D\big(\u,A\big) \Big( \Db_n\big((u_k,\1_{-k}),A\big) - u_k \Db_n(\1,A)\Big),
\label{instrumental_process}
\end{equation}
denoting par $\partial_k D(\u,A) $ the partial derivative of the map $\u\mapsto D(\u,A)$ w.r.t. $u_k$.
This new process $\widetilde \Db_n(\cdot,A)$ will yield a nice approximation of the process of interest $\bar \Db_n(\cdot,A)$, as stated in the theorem below.

\mds

\begin{cond}
\label{cond_regularity_copula}
For every $k\in\{1,\ldots,p\}$, the partial derivative $\partial_k D(\u,A)$ of $D(\cdot,A)$ w.r.t. $u_k$ exists and is continuous on the set
$V_k:=\{\u\in [0,1]^p, 0<u_k<1\}$.
\end{cond}
The latter assumption is the standard ``minimal'' regularity condition, as stated in~\cite{segers}, so that the usual empirical copula process weakly converges in $\ell^\infty([0,1]^p)$.

\begin{thm}
\label{WeakConvCopProcess}
If $p_A>0$ and Condition~\ref{cond_regularity_copula} holds, then
$ \sup_{\u\in [0,1]^p} | (\bar\Db_n - \widetilde \Db_n)(\u,A) |$ tends to zero in probability.
\end{thm}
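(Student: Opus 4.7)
My plan is to adapt Segers'~\cite{segers} approximation for the usual empirical copula process to the present setting, the only genuine difference being that all empirical objects are built from the random subsample $\{i : \Z_i\in A\}$ of size $n_A$. The starting point is the identity $\bar D_n(\u,A) = D_n\bigl(G_{n,1}^{-1}(u_1|A),\ldots,G_{n,p}^{-1}(u_p|A),A\bigr)$, which yields the decomposition
\begin{equation*}
\bar\Db_n(\u,A) = \Db_n\bigl(\mathbf{G}_n^{-1}(\u|A),A\bigr) + \sqrt{n}\,\Bigl(D\bigl(\mathbf{G}_n^{-1}(\u|A),A\bigr) - D(\u,A)\Bigr),
\end{equation*}
using the shorthand $\mathbf{G}_n^{-1}(\u|A) := (G_{n,1}^{-1}(u_1|A),\ldots,G_{n,p}^{-1}(u_p|A))$.

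For the first summand, the process $\Db_n(\cdot,A)$ is an empirical process indexed by a VC class of indicators, hence asymptotically tight and equicontinuous in $\ell^\infty([0,1]^p)$. Combined with the Glivenko--Cantelli bound $\sup_u|G_{n,k}^{-1}(u|A) - u|\to 0$ a.s.\ (which holds because $n_A\to\infty$ a.s.\ as $p_A>0$), this gives $\Db_n(\mathbf{G}_n^{-1}(\u|A),A) = \Db_n(\u,A) + o_P(1)$ uniformly in~$\u$.

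Next I would analyze the deterministic part by a first-order expansion: under Condition~\ref{cond_regularity_copula}, at every $\u$ in the interior,
\begin{equation*}
D\bigl(\mathbf{G}_n^{-1}(\u|A),A\bigr) - D(\u,A) = \sum_{k=1}^p \partial_k D(\u,A)\,\bigl(G_{n,k}^{-1}(u_k|A) - u_k\bigr) + R_n(\u),
\end{equation*}
and prove $\sqrt n \sup_\u |R_n(\u)| = o_P(1)$. I expect this to be the main technical obstacle, since $\partial_k D(\cdot,A)$ may blow up on the boundary, where Condition~\ref{cond_regularity_copula} imposes nothing. I would reproduce the boundary trick of~\cite{segers}: for any coordinate $k$ with $u_k\in[0,\delta]\cup[1-\delta,1]$, control both the true $k$-th increment of $D(\cdot,A)$ and the linearized term $\partial_k D(\u,A)\bigl(G_{n,k}^{-1}(u_k|A) - u_k\bigr)$ separately by monotonicity and Lipschitz bounds on $D(\cdot,A)$; on the remaining compact set, invoke uniform continuity of $\partial_k D(\cdot,A)$ and the rate $\sqrt n\sup_u|G_{n,k}^{-1}(u|A) - u| = O_P(1)$; finally let $\delta\downarrow 0$.

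Finally I would translate each marginal quantile deviation into a functional of~$\Db_n$. The equality $D_n((u_k,\1_{-k}),A) = \hat p_A\, G_{n,k}(u_k|A)$ together with $\Db_n(\1,A) = \sqrt n(\hat p_A - p_A)$ yields
\begin{equation*}
\Db_n\bigl((u_k,\1_{-k}),A\bigr) = \sqrt n \,\hat p_A\bigl(G_{n,k}(u_k|A) - u_k\bigr) + u_k\,\Db_n(\1,A).
\end{equation*}
The standard Bahadur representation $\sqrt{n_A}\bigl(G_{n,k}^{-1}(u|A) - u\bigr) = -\sqrt{n_A}\bigl(G_{n,k}(u|A) - u\bigr) + o_P(1)$ uniformly in $u\in[0,1]$, valid because the $U^A_{i,k}$ with $\Z_i\in A$ are i.i.d.\ uniform on $[0,1]$, together with $\hat p_A\to p_A>0$ a.s., gives
\begin{equation*}
\sqrt n\bigl(G_{n,k}^{-1}(u_k|A) - u_k\bigr) = -p_A^{-1}\bigl(\Db_n((u_k,\1_{-k}),A) - u_k\,\Db_n(\1,A)\bigr) + o_P(1),
\end{equation*}
uniformly in $u_k$. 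Substituting this into the Taylor expansion and collecting terms reconstructs exactly $\widetilde\Db_n(\u,A)$, which concludes the plan.
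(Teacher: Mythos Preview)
Your proposal is correct and follows essentially the same route as the paper: the same decomposition $\bar\Db_n(\u,A)=\Db_n(\mathbf G_n^{-1}(\u|A),A)+\sqrt n\{D(\mathbf G_n^{-1}(\u|A),A)-D(\u,A)\}$, equicontinuity of $\Db_n$ for the first term, a first-order expansion handled via Segers' boundary argument for the second, and the Bahadur representation combined with the identity $D_n((u_k,\1_{-k}),A)=\hat p_A\,G_{n,k}(u_k|A)$ to recover the $\Db_n$ expression. One cosmetic difference: the paper uses the exact mean value theorem along the segment $t\mapsto \u+t(\mathbf G_n^{-1}(\u|A)-\u)$ and then replaces $\partial_k D$ at the intermediate point by $\partial_k D(\u,A)$, whereas you expand directly at~$\u$ with a remainder; and note that $\partial_k D(\cdot,A)$ does not ``blow up'' near the boundary (the paper shows $|\partial_k D(\u,A)|\le p_A$), the issue being only existence/continuity there.
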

See the proof in the appendix, in Section~\ref{sec:proof:WeakConvCopProcess}. Note that $\widetilde \Db_n$ differs from the asymptotic approximation of the usual empirical copula process: compare $\widetilde\Db_n$ with Equation (3.2) and Proposition 3.1 in~\cite{segers}, for instance. This is due to the additional influence of the random sample size $n_A$, or, equivalently, the randomness of $\hat p_A$. This stresses that our results are not straightforward applications of the existing results in the literature.

\mds
Since the process $\Db_n$ is weakly convergent in $\ell^\infty([0,1]^p)$ - as any usual empirical process -, this yields the weak convergence of $\widetilde \Db_n$ and then of $\bar \Db_n$ in the same space.

\mds
\begin{cor}
If $p_A>0$ and Condition~\ref{cond_regularity_copula} holds, then the process $\bar\Db_n(\cdot,A)$ weakly converges in $\ell^\infty([0,1]^p)$ towards the centered Gaussian process
$ \Db_\infty(\cdot,A)$, where
$$ \Db_\infty(\u,A):= \Bb(\u,A) - p_A^{-1}\sum_{k=1}^p \partial_k D\big(\u,A\big) \Big( \Bb\big((u_k,\1_{-k}),A\big) - u_k \Bb(\1,A)\Big),$$
denoting by $\Bb(\cdot,A)$ a Brownian bridge, whose covariance function is given as
\begin{eqnarray*}
 \Eb\big[\Bb(\u,A) \Bb(\u',A) \big]&=& \Pb( \U^A \leq \u \wedge \u', \Z\in A) - \Pb( \U^A \leq \u , \Z\in A)\Pb( \U^A \leq \u' , \Z\in A) \\
 &=& p_A C_{\X|\Z}( \u \wedge \u' | \Z\in A) - p_A^2 C_{\X|\Z}( \u | \Z\in A)C_{\X|\Z}( \u' | \Z\in A) ,
\end{eqnarray*}
for every $(\u,\u')\in [0,1]^p$.
\label{cor_weak_convergence_Dn}
\end{cor}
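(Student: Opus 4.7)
My plan is to derive the corollary as a direct consequence of Theorem~\ref{WeakConvCopProcess} combined with the standard Donsker theorem applied to the auxiliary process $\Db_n(\cdot,A)$. First, I would observe that $\Db_n(\cdot, A)$ is an ordinary empirical process based on the i.i.d. sample $(\U^A_i, \Z_i)_{i=1,\dots,n}$ and indexed by the collection of sets $\{\{\v \leq \u\}\times A : \u \in [0,1]^p\}$. This collection is a VC class (as an intersection of the VC class of lower-left quadrants in $\Rb^p$ with the fixed set $\Rb^p \times A$), hence a Donsker class. Therefore, $\Db_n(\cdot, A)$ converges weakly in $\ell^\infty([0,1]^p)$ towards a centered Gaussian process $\Bb(\cdot, A)$, whose covariance function is the pointwise covariance of the indicators:
\[
\Eb\big[\Db_n(\u,A)\Db_n(\u',A)\big] = \Pb(\U^A \leq \u \wedge \u', \Z\in A) - \Pb(\U^A \leq \u, \Z \in A)\Pb(\U^A \leq \u', \Z \in A).
\]
The second form in the statement follows by writing $\Pb(\U^A \leq \u, \Z \in A) = p_A\, C_{\X|\Z}(\u\,|\,\Z\in A)$.

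Second, I would introduce the linear map $\phi: \ell^\infty([0,1]^p) \to \ell^\infty([0,1]^p)$ defined by
\[
\phi(f)(\u) := f(\u) - p_A^{-1}\sum_{k=1}^p \partial_k D(\u,A) \Big(f\big((u_k,\1_{-k})\big) - u_k f(\1)\Big),
\]
so that $\widetilde \Db_n(\cdot, A) = \phi(\Db_n(\cdot, A))$ by~(\ref{instrumental_process}). Under Condition~\ref{cond_regularity_copula}, each coefficient $\partial_k D(\u,A)$ is bounded by $p_A$ (being a partial derivative of a c.d.f.), so $\phi$ is bounded with $\|\phi(f)\|_\infty \leq (1+2p)\|f\|_\infty$, hence continuous. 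On the measure-zero exceptional set $[0,1]^p \setminus V_k$ where $\partial_k D(\cdot,A)$ is not guaranteed to exist, I would follow the convention of~\cite{segers} and set $\partial_k D(\u,A) := 0$; this does not affect weak limits since the limiting Brownian bridge has almost surely continuous trajectories and vanishes at the relevant boundary points. By the continuous mapping theorem applied to $\phi$, $\widetilde\Db_n(\cdot,A)$ converges weakly in $\ell^\infty([0,1]^p)$ to $\phi(\Bb)(\cdot,A) = \Db_\infty(\cdot,A)$, which is Gaussian and centered as an affine transformation of $\Bb(\cdot,A)$.

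Third, I would conclude via Slutsky's lemma in $\ell^\infty([0,1]^p)$: since Theorem~\ref{WeakConvCopProcess} yields $\sup_{\u\in[0,1]^p}|(\bar\Db_n - \widetilde\Db_n)(\u,A)| = o_P(1)$, the weak limits of $\bar\Db_n(\cdot,A)$ and $\widetilde\Db_n(\cdot,A)$ coincide, giving the stated convergence.

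I do not anticipate a genuine obstacle, since the main analytic work is already packaged in Theorem~\ref{WeakConvCopProcess}; the corollary is essentially a transfer of weak convergence through the continuous linear operator $\phi$ together with Slutsky. The only point requiring a little care is the interpretation of $\partial_k D$ on the boundary of $[0,1]^p$, which is handled by the convention just described.
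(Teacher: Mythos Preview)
Your proposal is correct and follows essentially the same route as the paper: the paper simply states (right before the corollary) that since $\Db_n$ is weakly convergent in $\ell^\infty([0,1]^p)$ as a usual empirical process, this yields the weak convergence of $\widetilde\Db_n$ and then of $\bar\Db_n$. Your write-up makes this explicit by naming the Donsker property, the bounded linear map $\phi$, and the Slutsky step, which is exactly the intended argument.
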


\mds

Thus, we deduce the asymptotic behavior of
$ \bar C_n(\cdot | A)$ and $\bar \Cb_n$, since $ \bar C_n(\u | A) =  \bar D_n(\u, A)/ \bar D_n(\1 , A)$.
To this goal, recall that
$$C(\u |A)=\Pb(\U^A \leq \u | \Z\in A)=D(\u,A)/D(\1,A).$$
Therefore, simple algebra yield
\begin{eqnarray}
\lefteqn{ \bar\Cb_n(\u|A):=\sqrt{n}\big\{ \bar C_n(\u | A) - C(\u |A) \big\}=
\sqrt{n} \Big\{ \frac{\bar D_n(\u,A)}{\bar D_n(\1,A)} - \frac{D(\u,A)}{D(\1,A)}  \Big\}   \nonumber }\\
&=&  \sqrt{n} \bar D_n(\u,A) \Big\{ \frac{1}{\bar D_n(\1,A)} -  \frac{1}{ D(\1,A)}  \Big\}+
\frac{\sqrt{n}(\bar D_n- D)(\u,A)}{D(\1,A)}   \nonumber \\
&=&
 \bar D_n(\u,A)  \frac{ \sqrt{n}(D(\1,A) - \bar D_n(\1,A))}{  \bar D_n(\1,A) D(\1,A)}  +
\frac{\sqrt{n}(\bar D_n- D)(\u,A)}{D(\1,A)}   \nonumber \\
&=&
 \frac{\bar \Db_n(\u,A)}{p_A} - D(\u,A)  \frac{ \bar \Db_n(\1,A)}{  p_A^2 }  + o_P(1). \label{relation_Cbn_barDbn}
\end{eqnarray}

We deduce from the latter relationship and Corollary~\ref{cor_weak_convergence_Dn} that $\bar\Cb_n(\cdot|A)$ is weakly convergent in $\ell^\infty([0,1]^p)$.

\mds
\begin{thm}
\label{weak_conv_emp_copula_process_sets}
If $p_A >0$ and Condition~\ref{cond_regularity_copula} holds, then
$\hat\Cb_n$ and $\bar \Cb_n$ weakly tend to a centered Gaussian process $\Cb_\infty(\cdot|A)$ in $\ell^\infty([0,1]^p)$, where
\begin{eqnarray*}
\lefteqn{   \Cb_\infty(\u|A):= \frac{\Db_\infty(\u,A)}{p_A} - D(\u,A)  \frac{\Db_\infty(\1,A)}{  p_A^2 } }\\
&=& \frac{ \Bb(\u,A)}{p_A}
 - \sum_{k=1}^p \frac{\partial_k D\big(\u,A\big)}{p_A^{2}}  \Big( \Bb\big((u_k,\1_{-k}),A\big) - u_k \Bb(\1,A)\Big)
- \frac{D(\u,A)}{p_A^2} \Bb(\1,A).
\end{eqnarray*}
\end{thm}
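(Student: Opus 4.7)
The plan is to deduce this theorem from the expansion~(\ref{relation_Cbn_barDbn}), the weak convergence of $\bar\Db_n(\cdot,A)$ established in Corollary~\ref{cor_weak_convergence_Dn}, and the continuous mapping theorem, and then to transfer the conclusion from $\bar\Cb_n$ to $\hat\Cb_n$ via~(\ref{equiv_cop_processes}).

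First, I would introduce the linear map $\phi : \ell^\infty([0,1]^p) \to \ell^\infty([0,1]^p)$ defined by
\[
\phi(f)(\u) := \frac{f(\u)}{p_A} - \frac{D(\u,A)}{p_A^{2}}\, f(\1).
\]
Since $p_A>0$ and $0 \le D(\u,A) \le p_A$, one has $\|\phi(f)\|_\infty \le (2/p_A)\|f\|_\infty$, so $\phi$ is continuous. Rewriting (\ref{relation_Cbn_barDbn}) as $\bar\Cb_n(\cdot|A) = \phi\bigl(\bar\Db_n(\cdot,A)\bigr) + r_n$, a direct computation gives $r_n(\u) = -\bar\Db_n(\1,A)\bigl[\bar D_n(\u,A)/(p_A \bar D_n(\1,A)) - D(\u,A)/p_A^{2}\bigr]$. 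Because $\bar\Db_n(\1,A) = O_P(1)$ and $\sup_{\u} |\bar D_n(\u,A) - D(\u,A)| = O_P(n^{-1/2})$ (a consequence of the tightness of $\bar\Db_n$ already implied by Corollary~\ref{cor_weak_convergence_Dn}), the bracket is $O_P(n^{-1/2})$ uniformly in $\u$, whence $\|r_n\|_\infty = o_P(1)$.

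Corollary~\ref{cor_weak_convergence_Dn} provides $\bar\Db_n(\cdot,A) \weakly \Db_\infty(\cdot,A)$ in $\ell^\infty([0,1]^p)$, so the continuous mapping theorem applied to $\phi$, combined with Slutsky's lemma to absorb $r_n$, yields $\bar\Cb_n(\cdot|A) \weakly \phi\bigl(\Db_\infty(\cdot,A)\bigr) =: \Cb_\infty(\cdot|A)$, which is centered and Gaussian as a continuous linear image of a centered Gaussian process. Substituting the closed form of $\Db_\infty$ from Corollary~\ref{cor_weak_convergence_Dn} into $\phi$, and observing that $\Db_\infty(\1,A) = \Bb(\1,A)$ because the correction term in the definition of $\Db_\infty$ vanishes at $\u = \1$, produces the explicit expression for $\Cb_\infty(\cdot|A)$ stated in the theorem. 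Finally, (\ref{equiv_cop_processes}) gives $\|\hat\Cb_n(\cdot|A) - \bar\Cb_n(\cdot|A)\|_\infty = o_P(1)$, so one last application of Slutsky's lemma transfers the weak convergence to $\hat\Cb_n$. The only step with real content is the uniform negligibility of $r_n$; everything else is routine bookkeeping around already-proven weak limits.
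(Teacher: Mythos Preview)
Your proposal is correct and follows essentially the same approach as the paper: the paper derives~(\ref{relation_Cbn_barDbn}) and then simply states that the weak convergence of $\bar\Cb_n(\cdot|A)$ follows from it together with Corollary~\ref{cor_weak_convergence_Dn}, which is exactly the continuous-mapping-plus-Slutsky argument you have spelled out in detail (including the explicit verification that the remainder $r_n$ is $o_P(1)$ and that $\Db_\infty(\1,A)=\Bb(\1,A)$). The transfer to $\hat\Cb_n$ via~(\ref{equiv_cop_processes}) is also the paper's route.
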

By simple calculations, we explicitly write the covariance function of the limiting conditional copula process $\Cb_\infty(\u|A)$. Moreover, the latter 
covariance can be empirically estimated: see Appendix~\ref{covariances}.

\mds

When there is not conditioning subset, or when $A=\Rb^q$ equivalently, then $p_A=1$ and $\Bb(\1,A)=0$ a.s. (its variance is zero). In this case, we see that $\Cb_\infty(\u|A)$ becomes the well-known weak limit of the usual empirical copula process, as in~\cite{FermaRaduWegkamp,segers}.
\textcolor{black}{Nonetheless, we stress that Theorem~\ref{weak_conv_emp_copula_process_sets} cannot be straightforwardly deduced from the weak convergence of usual empirical copula processes, due to the dependencies between $\X$ and $\Z$.}

\mds

\begin{rem}
Theorem~\ref{weak_conv_emp_copula_process_sets} is not a consequence of Theorem 5 in~\cite{RWZ} either, where the authors state the weak convergence of the usual empirical copula process in $\ell^\infty(\Gc)$ for some set of functions $\Gc$ from $[0,1]^p$ to $\Rb$. Indeed, first, such functions are assumed to be right-continuous and of bounded variation in the sense of Hardy-Krause (see their Assumption F) when we consider general borelian subsets $A$.
Second and more importantly, it is not possible to recover our processes $\hat\Cb_n(\cdot|A)$ or $\bar\Cb_n(\cdot|A)$ of interest with some quantities as $\int g \,d\Cb_n$ for some particular function $g$ and a usual empirical copula process $\Cb_n$.
\end{rem}

\subsection{Multiple conditioning subsets}
\label{multiple_subsets}

We now consider a finite family of borelian subsets $A_j\subset \Rb^{q}$ such that w have $p_{A_j}:= \Pb(\Z\in A_j)>0$ for every $j\in \{1,\ldots,m\}$ and a given $m > 0$.
Set $ \Ac:=\{A_1,\ldots,A_m\}$.
The subsets in $\Ac$ may be disjoint or not.
By the same reasonings as above in a $m$-dimensional setting, we can easily prove the weak convergence of the process
$\vec \Cb_n(\cdot|\Ac)$ defined on $[0,1]^{mp}$ as
$$ \vec \Cb_n(\vec\u|\Ac):=\big(\bar \Cb_n(\u_1|A_1),\ldots,\bar \Cb_n(\u_m| A_m)\big),$$
for every $\u_j\in [0,1]^p$, $j\in \{1,\ldots,m\}$, where $\vec\u := (\u_1, \dots, \u_m)$.

\begin{thm}
\label{weak_conv_emp_copula_process_sets_vectorial}
If, for every $j\in\{1,\ldots,m\}$, $p_{A_j} > 0$ and Condition~\ref{cond_regularity_copula} holds for $A=A_j$, then
$\vec\Cb_n(\cdot|\Ac)$ weakly tends to a multivariate centered Gaussian process $\vec\Cb_\infty(\cdot|\Ac)$ in $\ell^\infty([0,1]^{mp},\Rb^m)$, where
$$   \vec\Cb_\infty(\vec\u|\Ac):= \big( \Cb_\infty(\u_1|A_1),\ldots,\Cb_\infty(\u_m|A_m) \big),\;\; \u_j\in [0,1]^p, \; j\in \{1,\ldots,m\}.$$
\end{thm}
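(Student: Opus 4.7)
The plan is to mirror, in an $m$-fold vectorial setting, the chain of reductions already established for a single conditioning subset: first handle the unfeasible process $\Db_n$ jointly across subsets, then pass to $\bar\Db_n$ via the instrumental approximation of Theorem~\ref{WeakConvCopProcess}, and finally to $\bar\Cb_n$ via the identity~(\ref{relation_Cbn_barDbn}). All three reductions are already ``componentwise continuous'' in a sense that makes the multivariate statement a continuous-mapping-theorem corollary of a single joint Donsker result.

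Concretely, I would begin by establishing the joint weak convergence in $\ell^\infty([0,1]^p,\Rb^m)$ of the vector process $\big(\Db_n(\cdot,A_1),\ldots,\Db_n(\cdot,A_m)\big)$. All $m$ components are built from the \emph{same} i.i.d.\ sample $\big((\X_i,\Z_i)\big)_{i=1}^n$, with the only $j$-dependent ingredient being the deterministic transforms $U^{A_j}_{i,k}=F_k(X_{i,k}\,|\,\Z\in A_j)$. Hence, the indexing class
$$\Fc:=\Big\{ (\x,\z)\mapsto \1\big(F_1(x_1|\Z\in A_j)\leq u_1,\ldots,F_p(x_p|\Z\in A_j)\leq u_p,\z\in A_j\big)\,:\,\u\in[0,1]^p,\,j\leq m\Big\}$$
is a finite union of VC classes, hence itself VC and bounded, thus Donsker. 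Its weak limit is a centered Gaussian process whose covariance is obtained by direct computation of $\Eb[\1(\U^{A_j}\leq \u,\Z\in A_j)\1(\U^{A_{j'}}\leq \u',\Z\in A_{j'})]$ minus the product of marginals, for every pair $(j,j')$.

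Next, Condition~\ref{cond_regularity_copula} holds for each $A_j$ separately, so Theorem~\ref{WeakConvCopProcess} applied to each $j$ gives $\sup_{\u\in[0,1]^p}|\bar\Db_n(\u,A_j)-\widetilde\Db_n(\u,A_j)|=o_P(1)$, and since $m$ is finite this remains $o_P(1)$ uniformly in $j$. Because $\widetilde\Db_n(\cdot,A_j)$ is a deterministic continuous linear functional of $\Db_n(\cdot,A_j)$ (multiplication by the continuous functions $\partial_k D(\cdot,A_j)/p_{A_j}$ and pointwise evaluation at $(u_k,\1_{-k})$ and $\1$), the joint weak convergence of the vector $(\Db_n(\cdot,A_j))_j$ transfers to joint weak convergence of $(\widetilde\Db_n(\cdot,A_j))_j$, and then of $(\bar\Db_n(\cdot,A_j))_j$ toward $(\Db_\infty(\cdot,A_j))_j$. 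Finally, relation~(\ref{relation_Cbn_barDbn}) expresses each $\bar\Cb_n(\u_j|A_j)$ as a continuous linear map of $\bar\Db_n(\cdot,A_j)$ up to $o_P(1)$; stacking these $m$ maps and invoking the continuous mapping theorem yields the claimed joint convergence to $\vec\Cb_\infty(\cdot|\Ac)$.

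The main obstacle I anticipate is the joint Donsker step. Even though each $\Db_n(\cdot,A_j)$ is a standard empirical process, the cross-$j$ covariance structure requires that all $m$ processes be treated on the same probability space with consistent indexing; one must check that bundling the $m$ VC classes into a single class preserves the VC property (which it does under finite union) and that the envelope is square-integrable (bounded by $1$ here), so the uniform entropy integral condition of van der Vaart--Wellner applies. Once this is granted, the remainder is routine: two continuous linear reductions and an application of the continuous mapping theorem, exactly as in the $m=1$ case, with no new delicate point introduced by the multivariate stacking.
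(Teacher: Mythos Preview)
Your proposal is correct and follows precisely the route the paper intends: the authors write that the result is obtained ``by the same reasonings as above in a $m$-dimensional setting'' and leave the proof to the reader, and your outline does exactly this --- a joint Donsker argument for the stacked $\Db_n(\cdot,A_j)$ processes (via a finite union of VC classes with bounded envelope), followed by the componentwise application of Theorem~\ref{WeakConvCopProcess} and relation~(\ref{relation_Cbn_barDbn}) together with the continuous mapping theorem. There is no substantive divergence from the paper's (implied) argument.
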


The proof is straightforward and left to the reader.
The latter result is obviously true replacing $\bar \Cb_n$ by $\hat \Cb_n$. It will be useful for building and testing the relevance of some partitions $\Ac$ of the space of covariates, in the spirit of Pearson's chi-square test. Typically, this means testing the equality between the copulas $C_n(\cdot|A_j)$ and $C_n(\cdot|A_k)$ for several couples $(j,k) \in \{1,\dots, m\}^2$.

\mds

We can specify the covariance function of $ \vec\Cb_\infty(\vec\u|\Ac)$ and  $\vec\Cb_\infty(\vec\u'|\Ac)$, for any vectors $\vec\u$ and $\vec\u'$ in $[0,1]^{mp}$
by recalling that
\begin{eqnarray*}
\lefteqn{   \Cb_\infty(\u_j|A_j):=  \frac{ \Bb(\u_j,A_j)}{p_{A_j}}
 - \sum_{i=1}^p \frac{\partial_i D\big(\u_j,A_j\big)}{p_{A_j}^{2}}  \Big( \Bb\big((u_{j,i},\1_{-i}),A_j\big) - u_{j,i} \Bb(\1,A_j)\Big)  }\\
&-& \frac{D(\u_j,A_j)}{p_{A_j}^2} \Bb(\1,A_j), \hspace{6cm}
\end{eqnarray*}
where $\u_j=(u_{j,1},\ldots,u_{j,p})$, $j\in \{1,\ldots,m\}$
and by noting that
\begin{align}
    \Eb\big[\Bb(\u_{j},A_j) \Bb(\u_{k},A_{k}) \big]
    &= \Pb( \U^{A_j} \leq \u_j, \U^{A_{k}} \leq \u_k, \Z\in A_j\cap A_{k}) \nonumber \\
    &- \Pb( \U^{A_{j}} \leq \u_j , \Z\in A_j)
    \, \Pb( \U^{A_{k}} \leq \u_k , \Z\in A_{k}) ,
\label{general_cov_B}
\end{align}
for every $(j,k)\in \{1,\ldots,m\}^2$. Note we have not imposed that the subsets of $A_j$ are disjoint. Nonetheless, in the case of a partition (disjoint 
subsets $A_k$), calculations become significantly simpler because of the nullity of 
$\Pb( \U^{A_j} \leq \u_j, \U^{A_{k}} \leq \u_k, \Z\in A_j\cap A_{k})$.

\mds

Simple (but tedious) calculations yield the covariance function of the limiting vectorial conditional copula process 
$\vec\Cb_\infty(\vec\u|\Ac)$. Moreover, the latter 
covariance can be empirically estimated: see Appendix~\ref{covariances}.

\section{Bootstrap approximations}
\label{bootstrap_approx}

The limiting laws of the previous empirical processes $\hat \Cb_n$, $\bar \Cb_n$ (or even $\hat \Db_n$ and $\bar \Db_n$) are complex.
Therefore, it is difficult to evaluate the weak limits of some functionals of the latter processes, in particular the asymptotic variances of some
test statistics that may be built from them.
The usual answer to this problem is to rely on bootstrap schemes. In this section, we study the validity of some bootstrap schemes for our particular empirical copula processes.
We will prove the validity of the general exchangeable bootstrap for such processes, a result that has apparently never been stated in the literature even in the case of usual copulas, to the best of our knowledge.
Moreover, we extend the nonparametric bootstrap and the multiplier bootstrap techniques to the case of conditioning events that have a non-zero probability (the case of pointwise events is dealt in~\cite{omelka2013}).

\mds

\subsection{The exchangeable bootstrap}
\label{exchangeable_bootstrap}

For the sake of generality, we rely on the exchangeable bootstrap (also called ``wild bootstrap'' by some authors), as introduced in~\cite{VVW}.
For every $n$, let $\W_n:=(W_{n,1},\ldots,W_{n,n})$ be an exchangeable nonnegative random vector and $\bar W_n:=(W_{n,1}+\ldots,W_{n,n})/n$ its average.
For any borelian subset $A$, $p_A>0$, the weighted empirical bootstrap process of $\Db_n(\cdot,A)$ that is related to our initial i.i.d. sample $(\X_i,\Z_i)_{i=1,\ldots,n}$ is defined as
\begin{eqnarray*}
\lefteqn{ \Db^*_n (\u,A):= \frac{1}{\sqrt{n}} \bigg(\sum_{i=1}^n W_{n,i} \big\{\1( X_{i,1} \leq F_{n,1}^{-1}(u_1 |A),\ldots,X_{i,p} \leq F_{n,p}^{-1}(u_p |A),\Z_i\in A)
- \bar D_n(\u,A)\big\} \bigg)    }\\
&=& \frac{1}{\sqrt{n}}\sum_{i=1}^n W_{n,i} \1 \Big( X_{i,1} \leq F_{n,1}^{-1}(u_1 |A),\ldots,X_{i,p} \leq F_{n,p}^{-1}(u_p |A),\Z_i\in A \Big)
- \sqrt{n} \, \bar W_n \bar D_n(\u,A) .
\end{eqnarray*}
\mds
We require the standard conditions on the weights (see Theorem (3.6.13) in~\cite{VVW}).

\mds

\begin{cond}
\label{regul_weights_bootstrap}
$$ \sup_n \int_0^\infty \sqrt{\Pb\big( |W_{n,1}- \bar W_n| > t \big)}\, dt <\infty,$$
$$ n^{-1/2} \Eb \big[ \max_{1\leq i \leq n} | W_{n,i} - \bar W_n | \big] \stackrel{P}{\longrightarrow} 0,\;\text{and}\; n^{-1} \sum_{i=1}^n (W_{n,i} - \bar W_n)^2  \stackrel{P}{\longrightarrow} 1.$$
\end{cond}

Note that $\Db^*_n (\u,A)$ can be calculated, contrary to $\Db_n(\cdot,A)$. Since its asymptotic law will be ``close to'' the limiting law of $\Db_n(\cdot,A)$ when $n$ tends to the infinifty, resampling many times the vector $\W_n$ allows the calculation of many realizations of $\Db^*_n (\u,A)$, given the initial sample. This yields a numerical way of approximating the limiting law of $\Db_n (\u,A)$ or some functionals of the latter process. This is the usual and fruitful idea of most resampling techniques.

\mds

The same reasoning will apply to the copula processes $\hat \Cb_n(\cdot|A)$ and $\bar \Cb_n(\cdot|A)$, due to the relationships~(\ref{instrumental_process}) and~(\ref{relation_Cbn_barDbn}): to prove the validity of an exchangeable bootstrap scheme for the latter copula processes, we first approximate the unfeasible process $\Db_n(\cdot,A)$ by the weighted empirical bootstrapped process $\Db^*_n (\cdot,A)$; second, we invoke Theorem~\ref{WeakConvCopProcess} to obtain a similar results for $\bar \Db_n(\cdot,A)$; third, we use the relationship between $\bar\Db_n(\cdot,A)$ and $\bar\Cb_n(\cdot | A)$ and deduce a bootstrap approximation for our ``conditioned'' copula processes.

\mds

To be specific, let us consider $M$ independent realizations of the vector of weights $\W_n$ (that are mutually independent draws and independent of the initial sample), and the associated processes
$\Db^*_{n,k}(\cdot,A)$, $k\in \{1,\ldots,M\}$.
We first prove the validity of our bootstrap scheme for $\Db_n(\cdot,A)$.
Denote by $\Dc_{n,M,A}^*$ the process defined on $[0,1]^{p(M+1)}$ as
$$\Dc_{n,M,A}^*(\u_0,\u_1,\ldots,\u_M):=
\big(\Db_{n}(\u_0,A),\Db^*_{n,1}(\u_1,A),\ldots,\Db^*_{n,M}(\u_M,A) \big),$$
for every vectors $\u_0,\ldots,\u_M$ in $[0,1]^p$.
Moreover, denote by $\vec\Bb_\infty(\cdot,A)$ a process on $[0,1]^{p(M+1)}$ that concatenates $M+1$ independent versions of the Brownian bridge $\Bb(\cdot,A)$ introduced in Corollary~\ref{cor_weak_convergence_Dn}.

\mds

\begin{thm}
Under Condition~\ref{regul_weights_bootstrap}, for any $M\geq 2$ and when $n\rightarrow \infty$,
the process $\Dc_{n,M,A}^*$ weakly tends to $\vec\Bb_\infty(\cdot,A)$ in $\ell^{\infty}([0,1]^{p(M+1)},\Rb^{M+1})$.
\label{validity_boot_Dcn}
\end{thm}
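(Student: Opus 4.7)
The plan is to reduce the joint convergence of $\Dc_{n,M,A}^*$ to the standard exchangeable bootstrap functional central limit theorem (Theorem~3.6.13 in~\cite{VVW}) applied to a VC-subgraph class, and then to remove the estimated quantiles $F_{n,k}^{-1}(\cdot|A)$ from the bootstrap by a stochastic equicontinuity argument. Step~1 is a rewriting: since $(X_{i,k}\leq F_{n,k}^{-1}(u_k|A)) \Leftrightarrow (U^A_{i,k}\leq G_{n,k}^{-1}(u_k|A))$ and $\bar D_n(\u,A)=n^{-1}\sum_i \1(\U^A_i\leq \vec G_n^{-1}(\u|A),\Z_i\in A)$, a direct computation (in which the $\bar W_n$-correction term collapses) yields $\Db^*_n(\u,A)=\tilde\Db^*_n(\vec G_n^{-1}(\u|A),A)$, where
$$\tilde\Db^*_n(\v,A) := \frac{1}{\sqrt n}\sum_{i=1}^n (W_{n,i}-\bar W_n)\,\1(\U^A_i\leq \v,\Z_i\in A),\quad \v\in[0,1]^p,$$
and $\vec G_n^{-1}(\u|A):=(G_{n,1}^{-1}(u_1|A),\ldots,G_{n,p}^{-1}(u_p|A))$.

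Step~2 applies classical theory to the ideal bootstrap $\tilde\Db^*_n$. The class $\Fc:=\{(\y,\z)\mapsto \1(\y\leq \v,\z\in A) : \v\in[0,1]^p\}$ is a VC-subgraph class with constant envelope $1$, hence $\Pb$-Donsker. Under Condition~\ref{regul_weights_bootstrap}, Theorem~3.6.13 of~\cite{VVW}, applied jointly to the $M$ mutually independent vectors of weights, implies that $\bigl(\Db_n(\cdot,A),\tilde\Db^*_{n,1}(\cdot,A),\ldots,\tilde\Db^*_{n,M}(\cdot,A)\bigr)$ converges weakly in $\ell^\infty([0,1]^p,\Rb)^{M+1}$ to $M+1$ mutually independent copies of the centered Gaussian process $\Bb(\cdot,A)$ from Corollary~\ref{cor_weak_convergence_Dn}.

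Step~3 removes the quantile substitution by showing, for each bootstrap copy, that $\sup_{\u\in[0,1]^p}|\tilde\Db^*_n(\vec G_n^{-1}(\u|A),A)-\tilde\Db^*_n(\u,A)| \inprob 0$. This relies on: (a)~the uniform consistency $\sup_{u\in[0,1]}|G_{n,k}^{-1}(u|A)-u|\to 0$ a.s., which follows from Glivenko--Cantelli applied to the conditional empirical distribution $G_{n,k}(\cdot|A)$ of the uniform variable $U^A_k$ (valid because $n_A/n\to p_A>0$ a.s., so the effective sub-sample size diverges) together with continuity of the identity inverse; and (b)~the asymptotic equicontinuity of $\tilde\Db^*_n$ on $\Fc$ with respect to its intrinsic $L^2(\Pb)$-pseudometric $\rho$, which by the Lipschitz property of copulas satisfies $\rho(\v,\v')^2\leq p_A\sum_k |v_k-v_k'|$ and is therefore dominated by the Euclidean distance on $[0,1]^p$. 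A Slutsky-type argument combining Steps~2 and~3 delivers the claimed weak convergence of $\Dc_{n,M,A}^*$.

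The main obstacle lies in Step~3: one must substitute the data-dependent vector $\vec G_n^{-1}(\u|A)$ into the bootstrap empirical process \emph{uniformly} in $\u$. This is exactly where the Donskerness of $\Fc$ is indispensable, since it furnishes the bootstrap-level asymptotic equicontinuity needed to absorb the fluctuations of $G_{n,k}^{-1}$. A subtler point worth noting is that the random sub-sample size $n_A$ does not cause any difficulty here, because $\Db_n$ and $\tilde\Db^*_n$ are normalized by $\sqrt n$ and sum over all $i\in\{1,\ldots,n\}$, so the problem fits directly into the standard exchangeable bootstrap framework for i.i.d.\ observations on $\Rb^{p+q}$.
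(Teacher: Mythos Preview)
Your approach is essentially the same as the paper's: reduce to a VC/Donsker indicator class, invoke the exchangeable bootstrap functional CLT (Theorem~3.6.13 in~\cite{VVW}), and remove the estimated quantiles by asymptotic equicontinuity. The paper works with the original observations $(\X_i,\Z_i)$ and the class $\Gc_0=\{(\x,\z)\mapsto\1(\x\leq\y,\z\in A):\y\in\Rb^p\}$, whereas you pass to the uniforms $\U^A_i$ and the class $\Fc$; these are equivalent via the probability integral transform, and your Step~1 identity $\Db^*_n(\u,A)=\tilde\Db^*_n(\vec G_n^{-1}(\u|A),A)$ is correct.

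There is one genuine gap in your Step~2. Theorem~3.6.13 of~\cite{VVW} yields \emph{conditional} bootstrap consistency, i.e.\ $\sup_{h\in BL_1}\big|\Eb_{\W}[h(\tilde\Db^*_{n,j})]-\Eb[h(\Bb(\cdot,A))]\big|\to 0$ in outer probability for each fixed $j$. It does not by itself deliver the \emph{unconditional joint} weak convergence of the $(M{+}1)$-vector $\big(\Db_n,\tilde\Db^*_{n,1},\ldots,\tilde\Db^*_{n,M}\big)$ to $M{+}1$ \emph{independent} copies of $\Bb(\cdot,A)$ in $\ell^\infty([0,1]^{p(M+1)},\Rb^{M+1})$, which is exactly the statement of the theorem. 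The passage from conditional consistency in probability to joint unconditional weak convergence with the asserted independence structure is precisely the content of Lemma~2.2 in B\"ucher and Kojadinovic~\cite{KojaBuecher}, and the paper invokes that lemma explicitly. Your phrase ``applied jointly to the $M$ mutually independent vectors of weights'' does not bridge this gap; you should either cite~\cite{KojaBuecher} or supply the argument yourself.
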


See the proof in Section~\ref{proof:validity_boot_Dcn} of the appendix. 
The latter result validates the use of the considered bootstrap scheme. It has not to be confused with fidi weak convergence of $\Dc_{n,M,A}^*$, that is just a consequence of Theorem~\ref{validity_boot_Dcn}.

\mds

Thus, we can easily build a bootstrap estimator of $\widetilde \Db_n(\cdot,A)$, and then of $\bar \Db_n(\cdot,A)$.
Recalling Equation~(\ref{instrumental_process}), we evaluate the partial derivatives of $D(\cdot,A)$ as in~\cite{KojaSegersYan}: for every $\u\in [0,1]^p$,
\begin{equation}
 \partial_k D(\u,A) \simeq \widehat{\partial_k D}(\u,A):= \frac{1}{u_{k,n}^+ - u_{k,n}^-}
\Big( \bar D_n(\u_{-k},u_{k,n}^+,A) -\bar D_n(\u_{-k},u_{k,n}^-,A)  \Big),
\label{def_estimated_derivatives}
\end{equation}
where $u_{k,n}^+:=\min(u_k+n^{-1/2},1)$, $u_{k,n}^-:=\max(u_k-n^{-1/2},0)$ and with obvious notations.
Now, the bootstrapped version of $\widetilde \Db_n(\cdot,A)$ is defined as
\begin{equation}
\widetilde \Db^*_n(\u,A)
:= \Db^*_n(\u,A) - \hat p_A^{-1}\sum_{k=1}^p
\widehat{\partial_k D}\big(\u,A\big) \Big( \Db^*_n\big((u_k,\1_{-k}),A\big) - u_k \Db^*_n(\1,A)\Big).
\label{instrumental_process_boot}
\end{equation}
Importantly, note the latter process is a valid bootstrapped approximation of $\bar \Db_n(\cdot,A)$ too, because 
$\tilde \Db_n(\cdot,A)$ and $\bar \Db_n(\cdot,A)$ have the same limiting law (Theorem~\ref{WeakConvCopProcess}).

\mds
Denote by $\bar\Dc_{n,M,A}^*$ the process defined on $[0,1]^{p(M+1)}$ by
$$\bar\Dc_{n,M,A}^*(\u_0,\u_1,\ldots,\u_M):=
\big(\bar\Db_{n}(\u_0,A),\widetilde\Db^*_{n,1}(\u_1,A),\ldots,\widetilde \Db^*_{n,M}(\u_M,A) \big).$$
Moreover, denote by $\Dc_\infty(\cdot,A)$ a process on $[0,1]^{p(M+1)}$ that concatenates $M+1$ independent versions of $\Db_\infty(\cdot,A)$, as defined in Corollary~\ref{cor_weak_convergence_Dn}.
Then, we are able to state the validity of the exchangeable bootstrap for $\bar \Db_n$.

\begin{thm}
If $p_A>0$ and if Conditions~\ref{cond_regularity_copula} and~\ref{regul_weights_bootstrap} hold, then
the process $\bar\Dc_{n,M,A}^*$ weakly tends to $\Dc_\infty(\cdot,A)$ in $\ell^\infty([0,1]^{p(M+1)},\Rb^{M+1})$.
\end{thm}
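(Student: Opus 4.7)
The plan is to deduce the result from Theorem~\ref{validity_boot_Dcn} by applying a uniformly consistent linear operator to each coordinate of $\Dc_{n,M,A}^*$. For $f\in\ell^\infty([0,1]^p)$, introduce the bounded linear operator
\[
\Psi_A(f)(\u) := f(\u) - p_A^{-1}\sum_{k=1}^p \partial_k D(\u,A)\bigl(f((u_k,\1_{-k})) - u_k f(\1)\bigr),
\]
and its empirical counterpart $\hat\Psi_{n,A}$, obtained by replacing $p_A$ by $\hat p_A$ and each $\partial_k D$ by $\widehat{\partial_k D}$. By~(\ref{instrumental_process}) and~(\ref{instrumental_process_boot}), $\widetilde\Db_n(\cdot,A) = \Psi_A(\Db_n(\cdot,A))$ and $\widetilde\Db^*_{n,k}(\cdot,A) = \hat\Psi_{n,A}(\Db^*_{n,k}(\cdot,A))$. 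Since $|\partial_k D|\leq 1$ on $[0,1]^p$, $\Psi_A$ is continuous from $\ell^\infty([0,1]^p)$ to itself.

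First I would apply the continuous mapping theorem coordinatewise to Theorem~\ref{validity_boot_Dcn}: the vector $\bigl(\Psi_A(\Db_n),\Psi_A(\Db^*_{n,1}),\ldots,\Psi_A(\Db^*_{n,M})\bigr)$ converges weakly in $\ell^\infty([0,1]^{p(M+1)},\Rb^{M+1})$ to $(\Psi_A(\Bb),\Psi_A(\Bb_1),\ldots,\Psi_A(\Bb_M))$, where $\Bb,\Bb_1,\ldots,\Bb_M$ are mutually independent Brownian bridges with the covariance of Corollary~\ref{cor_weak_convergence_Dn}; by that corollary each $\Psi_A(\Bb_j)$ is an independent copy of $\Db_\infty(\cdot,A)$. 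Because $\Psi_A(\Db_n) = \widetilde\Db_n$ and $\sup_\u|(\bar\Db_n - \widetilde\Db_n)(\u,A)| = o_P(1)$ by Theorem~\ref{WeakConvCopProcess}, Slutsky's lemma replaces the first coordinate $\Psi_A(\Db_n)$ by $\bar\Db_n$ without altering the weak limit. It then only remains to replace each $\Psi_A(\Db^*_{n,k})$ by $\hat\Psi_{n,A}(\Db^*_{n,k}) = \widetilde\Db^*_{n,k}$.

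For this, observe that $\Db^*_{n,k}(\cdot,A)=O_P(1)$ in sup norm by Theorem~\ref{validity_boot_Dcn} and $\hat p_A\to p_A$ a.s. The task reduces to showing, for each $k\in\{1,\ldots,p\}$, that
\[
\sup_{\u\in[0,1]^p}\Bigl|\bigl(\widehat{\partial_k D}(\u,A) - \partial_k D(\u,A)\bigr)\bigl(\Db^*_{n,k}((u_k,\1_{-k}),A) - u_k\,\Db^*_{n,k}(\1,A)\bigr)\Bigr| \stackrel{P}{\longrightarrow} 0.
\]
The hard part will be obtaining this uniform control under the minimal regularity of Condition~\ref{cond_regularity_copula}. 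On any compact subset of $V_k=\{0<u_k<1\}$, continuity of $\partial_k D(\cdot,A)$, Glivenko--Cantelli for $\bar D_n$, and the finite-difference form~(\ref{def_estimated_derivatives}) give $\sup|\widehat{\partial_k D}-\partial_k D|\to 0$ in probability, following the argument used in Proposition 4.2 of~\cite{KojaSegersYan}. Near the boundary $u_k\in\{0,1\}$ the derivative estimator need not converge, but the multiplicative factor $\Db^*_{n,k}((u_k,\1_{-k}),A)-u_k\Db^*_{n,k}(\1,A)$ vanishes identically at $u_k\in\{0,1\}$ (both terms are equal or both null) and, by asymptotic equicontinuity of $\Db^*_{n,k}$ inherited from Theorem~\ref{validity_boot_Dcn}, remains uniformly small on shrinking $O(n^{-1/2})$ strips around the boundary; combined with the crude bound $|\widehat{\partial_k D}-\partial_k D|\leq 2$, this controls the product there, in the spirit of the endpoint analysis of Segers~\cite{segers}. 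A final Slutsky argument in $\ell^\infty$ then yields the joint weak convergence of $\bar\Dc_{n,M,A}^*$ toward $\Dc_\infty(\cdot,A)$.
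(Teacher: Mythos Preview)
Your approach is essentially the paper's own: reduce to Theorem~\ref{validity_boot_Dcn}, bound the estimated partial derivatives uniformly, establish their uniform consistency on compact sets of $V_k$ (the paper invokes Proposition~2 and Lemma~2 of~\cite{KojaSegersYan}), and handle the boundary via the vanishing and asymptotic equicontinuity of the multiplicative factor exactly as in Proposition~3.2 of~\cite{segers}. Two small imprecisions: the crude bound $|\widehat{\partial_k D}-\partial_k D|\leq 2$ is not correct---the paper shows $|\widehat{\partial_k D}|\leq 5$, so the difference is bounded by $6$ (any finite constant suffices); and the endpoint argument does not use ``shrinking $O(n^{-1/2})$ strips'' but fixed strips of width $\delta$ with $\delta\to 0$ after $n\to\infty$.
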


\begin{proof}
\label{boot_barDbn_process}
With the same arguments as in the proof of Proposition 2 in~\cite{KojaSegersYan}, it can be proved that
$\sup_{\u\in [0,1]^p}|\widehat{\partial_k D}(\u,A)|\leq 5 $ for every $k\in \{1,\ldots,p\}$.
Moreover, by Lemma 2 in~\cite{KojaSegersYan}, for every $a,b$ s.t. $0<a<b<1$, we have
$$ \sup_{\u_{-k}\in [0,1]^{p-1}} \sup_{u_k\in [a,b]} |\partial_k D(\u,A)-\widehat{\partial_k D}(\u,A)| \stackrel{P}{\longrightarrow} 0. $$
By applying the same arguments as in Proposition 3.2 in~\cite{segers}, we obtain the result.
\end{proof}

Recalling Equation~(\ref{relation_Cbn_barDbn}), we deduce an exchangeable bootstrapped version of $\bar\Cb_n$, defined as
\begin{equation}
\widetilde \Cb^*_{n}(\u | A) := \frac{\widetilde \Db^*_n(\u,A)}{\hat p_A} - \bar D_n(\u,A)  \frac{ \widetilde \Db^*_n(\1,A)}{  \hat p_A^2 }\cdot
\label{boot_Cbn_process}
\end{equation}
Still considering $M$ independent random realizations of $\W_n$, we finally introduce the joint process
$\Cc_{n,M,A}^*$ whose trajectories are
$$(\u_0,\u_1,\ldots,\u_M)\mapsto \Cc_{n,M,A}^*(\u_0,\ldots,\u_M):=
\big(\bar\Cb_{n}(\u_0 | A),\widetilde\Cb^*_{n,1}(\u_1 | A),\ldots,\widetilde \Cb^*_{n,M}(\u_M | A) \big),$$
for every $\u_0,\ldots,\u_M$ in $[0,1]^p$.

\begin{cor}
\label{validity_bootstrap_barCn}
If $p_A>0$ and if Conditions~\ref{cond_regularity_copula} and~\ref{regul_weights_bootstrap} hold, then, for every $M\geq 2$ and when $n\rightarrow \infty$, the process
$\Cc_{n,M,A}^*$ weakly tends in $\ell^\infty([0,1]^{p(M+1)},\Rb^{M+1})$ to a process that
concatenates $M+1$ independent versions of $\Cb_\infty(\cdot | A)$, as defined in Theorem~\ref{weak_conv_emp_copula_process_sets}.
\end{cor}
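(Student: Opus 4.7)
The plan is to realize $\Cc_{n,M,A}^*$ as the image of the already-controlled process $\bar\Dc_{n,M,A}^*$ under a coordinate-wise continuous linear map, and then transfer the weak convergence granted by Theorem~\ref{boot_barDbn_process} via an extended continuous mapping / Slutsky-type argument.

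First I would record the exact algebraic identity
$$\bar\Cb_n(\u|A) \;=\; \frac{\bar\Db_n(\u,A)}{\hat p_A} \;-\; \frac{D(\u,A)\,\bar\Db_n(\1,A)}{\hat p_A\, p_A},$$
obtained from $\bar C_n(\u|A) = \bar D_n(\u,A)/\hat p_A$ by writing $1/\hat p_A - 1/p_A = (p_A - \hat p_A)/(\hat p_A p_A)$ and using $\sqrt{n}(\hat p_A - p_A) = \bar\Db_n(\1,A)$. Comparing this with the definition~(\ref{boot_Cbn_process}) of $\widetilde\Cb^*_{n,k}$ shows that the $k$-th coordinate of $\Cc_{n,M,A}^*$ equals $\Psi_n^{(k)}$ applied to the $k$-th coordinate of $\bar\Dc_{n,M,A}^*$, where
$$\Psi_n^{(0)}(g)(\u) := \frac{g(\u)}{\hat p_A} - \frac{D(\u,A)\, g(\1)}{\hat p_A\, p_A}, \qquad \Psi_n^{(k)}(g)(\u) := \frac{g(\u)}{\hat p_A} - \frac{\bar D_n(\u,A)\, g(\1)}{\hat p_A^2},\;\; k\geq 1.$$

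Next I would invoke the Glivenko--Cantelli theorem for the empirical measure of $(\X_i,\Z_i)$ to conclude that $\hat p_A \to p_A$ almost surely and $\sup_{\u\in[0,1]^p}|\bar D_n(\u,A) - D(\u,A)| \to 0$ almost surely, so that each $\Psi_n^{(k)}$ converges uniformly on bounded subsets of $\ell^\infty([0,1]^p)$, in probability, to the deterministic bounded linear map
$$\Psi(g)(\u) := \frac{g(\u)}{p_A} - \frac{D(\u,A)\, g(\1)}{p_A^2}.$$
Applied coordinate-wise to the weak convergence $\bar\Dc_{n,M,A}^* \weakly \Dc_\infty(\cdot,A)$ supplied by Theorem~\ref{boot_barDbn_process}, where the limit concatenates $M+1$ independent copies $\Db_\infty^{(0)},\ldots,\Db_\infty^{(M)}$ of $\Db_\infty(\cdot,A)$, a Slutsky-type extended continuous mapping argument yields the joint weak convergence $\Cc_{n,M,A}^* \weakly (\Psi(\Db_\infty^{(0)}),\ldots,\Psi(\Db_\infty^{(M)}))$ in $\ell^\infty([0,1]^{p(M+1)},\Rb^{M+1})$. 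Comparing with the definition of $\Cb_\infty(\cdot|A)$ in Theorem~\ref{weak_conv_emp_copula_process_sets} identifies each $\Psi(\Db_\infty^{(k)})$ as a copy of $\Cb_\infty(\cdot|A)$, and the $M+1$ coordinates remain independent because $\Psi$ is applied separately to the independent $\Db_\infty^{(k)}$'s.

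The only step I expect to require care is the joint handling of the random ``plug-in'' coefficients $\hat p_A$ and $\bar D_n(\cdot,A)$ alongside the bootstrap-driven weak limits; uniform consistency of these plug-ins on $[0,1]^p$ together with the linearity of $\Psi$ reduce this to a standard Slutsky-style perturbation of a continuous linear functional on a Banach space, and I do not anticipate any genuinely new technical difficulty beyond that.
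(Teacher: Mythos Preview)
Your argument is correct and follows precisely the route the paper implicitly indicates: it deduces the corollary from the preceding theorem on $\bar\Dc_{n,M,A}^*$ together with the relationship~(\ref{relation_Cbn_barDbn}) between $\bar\Cb_n$ and $\bar\Db_n$, and the definition~(\ref{boot_Cbn_process}), via a coordinate-wise Slutsky/continuous mapping step. Your exact algebraic identity for $\bar\Cb_n$ is a nice touch (the paper only records the $o_P(1)$ version in~(\ref{relation_Cbn_barDbn})), but it leads to the same limiting map $\Psi$ and hence the same conclusion.
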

In other words, we can approximate the limiting law of $\bar\Cb_{n}(\u | A)$
by the law of $\widetilde \Cb^*_{n}(\u | A)$, that is obtained by simulating many times independent realizations of the vector of weights $\W_n$, given the initial sample $(\X_i,\Z_i)_{i=1,\ldots,n}$.

\mds
\begin{rem}
Let $(\xi_{i})_{i\geq 1}$ be a sequence of i.i.d. random variables, with mean zero and variance one.
Formally, we can set $w_{n,k}=\xi_k$ for every $n$ and every $k\in \{1,\ldots,n\}$, even if the $\xi_i$ are not always nonnegative.
The same formulas as before yield some feasible bootstrapped processes that are similar to those obtained with the multiplier bootstrap of~\cite{remillard2009}, or in~\cite{segers}, Prop. 3.2. With the same techniques of proofs as above, it can be proved that this bootstrap scheme is valid, invoking Theorem 10.1 and Corollary 10.3 in~\cite{kosorok} instead of Theorem 3.6.13 in~\cite{VVW}. Therefore, we can state that Corollary~\ref{validity_bootstrap_barCn} applies, replacing $\W_n$ with i.i.d. normalized weights. In other words, the multiplier bootstrap methodology applies with empirical copula processes ``indexed by'' borelian subsets.
\end{rem}

\mds

It is straightforward to state some extensions of the latter results when considering several subsets $A_1,\ldots,A_m$ simultaneously, as in Section~\ref{multiple_subsets}.
With the same notations, let us do this task in the case of our previous bootstrap estimates.
To this goal, denote $\Ac=\{A_1,\ldots,A_m\}$, $\vec\u_j:=(\u_{j,1},\ldots,\u_{j,m})$, $\u_{j,k}\in [0,1]^p$ for every $j\in \{0,1,\ldots,M\}$, $k\in \{1,\ldots,m\}$,
$$ \vec \Cb^*_{n,j}(\vec\u_j| \Ac):=\big(\widetilde \Cb^*_n(\u_{j,1} | A_1),\ldots,\widetilde \Cb^*_n(\u_{j,m} | A_m)\big),\;\text{and}$$
$$ \vec\Cc^*_{n,M,\Ac}(\vec\u_0,\ldots,\vec\u_M):=
\big(\vec \Cb_n(\vec\u_0|\Ac),\vec\Cb^*_{n,1}(\vec\u_1|\Ac),\ldots, \vec\Cb^*_{n,M}(\vec\u_M|\Ac) \big).$$

\mds
\begin{thm}
If $p_{A_k}>0$ and Condition~\ref{cond_regularity_copula} is satisfied for every $A_k$, $k\in \{1,\ldots,m\}$ and if Condition~\ref{regul_weights_bootstrap} holds, then, for every $M\geq 2$ and when $n\rightarrow \infty$, the process
    $\vec\Cc^*_{n,M,\Ac}$ weakly converges in $\ell^\infty([0,1]^{mp(M+1)},\Rb^{m(M+1)})$ to a process that concatenates $M+1$ independent versions of $\vec\Cb_\infty(\cdot|\Ac)$ (as defined in Theorem~\ref{weak_conv_emp_copula_process_sets_vectorial}).
    \label{prop:exchangeable_bootstrap_copula}
\end{thm}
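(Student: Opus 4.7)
The plan is to promote the single-subset bootstrap validity (Theorem~\ref{validity_boot_Dcn} and Corollary~\ref{validity_bootstrap_barCn}) to the joint setting indexed by $\Ac=\{A_1,\ldots,A_m\}$, by replicating each step of the chain $\Db_n \rightsquigarrow \bar\Db_n \rightsquigarrow \bar\Cb_n$ coordinate-wise across the subsets. First, I would view all the relevant empirical processes as indexed by the common function class
$$ \Fc_\Ac := \bigl\{ (\x,\z)\mapsto \1(\x\leq \u, \z\in A_j)\,:\, \u\in [0,1]^p,\; j\in\{1,\ldots,m\}\bigr\}.$$
This is a finite union of uniformly bounded Donsker classes (each being the class of indicators of lower orthants intersected with a fixed borelian set), hence itself Donsker. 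Applying Theorem 3.6.13 in~\cite{VVW} under Condition~\ref{regul_weights_bootstrap} gives the joint weak convergence of the unfeasible process and $M$ independent bootstrap replicates:
$$ \Bigl(\bigl(\Db_n(\cdot,A_j)\bigr)_{j\leq m},\bigl(\Db^*_{n,1}(\cdot,A_j)\bigr)_{j\leq m},\ldots,\bigl(\Db^*_{n,M}(\cdot,A_j)\bigr)_{j\leq m}\Bigr)$$
towards $M+1$ independent copies of a Gaussian process whose covariance between the $A_j$- and $A_k$-coordinates is exactly the one given by~\eqref{general_cov_B}.

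Second, I would apply the approximation~\eqref{instrumental_process_boot} for each subset $A_j$ separately. Since Theorem~\ref{WeakConvCopProcess} yields $\sup_{\u} |(\bar\Db_n-\widetilde\Db_n)(\u,A_j)|=o_P(1)$ for each $j$, the $o_P(1)$ statement survives the maximum over the finite index set $\{1,\ldots,m\}$. The same holds for the bootstrapped counterpart via the argument in the proof of Theorem~\ref{boot_barDbn_process}, using that $\widehat{\partial_k D}(\cdot,A_j)$ is uniformly bounded by $5$ and converges uniformly to $\partial_k D(\cdot,A_j)$ on compacts $\{u_k\in[a,b]\}$, for each $(j,k)$. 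Combining these with the first step, one gets the joint weak convergence of
$$\Bigl(\bigl(\bar\Db_n(\cdot,A_j)\bigr)_j,\bigl(\widetilde\Db^*_{n,1}(\cdot,A_j)\bigr)_j,\ldots,\bigl(\widetilde\Db^*_{n,M}(\cdot,A_j)\bigr)_j\Bigr)$$
to $M+1$ independent versions of $\bigl(\Db_\infty(\cdot,A_j)\bigr)_{j\leq m}$.

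Third, I would pass from $\bar\Db_n$ to $\bar\Cb_n$ (and from $\widetilde\Db^*_n$ to $\widetilde\Cb^*_n$) via relations~\eqref{relation_Cbn_barDbn} and~\eqref{boot_Cbn_process}. Coordinate-wise, each transformation is the composition of the a.s. convergence $\hat p_{A_j}\to p_{A_j}>0$ and $\bar D_n(\cdot,A_j)\to D(\cdot,A_j)$, with the linear plug-in map $X\mapsto X/p_{A_j}-D(\cdot,A_j)X(\1)/p_{A_j}^2$, which is continuous on $\ell^\infty([0,1]^p)$. Slutsky's lemma, applied jointly in the product space, transports the weak convergence of the previous step to the desired joint limit of $\vec\Cc^*_{n,M,\Ac}$ in $\ell^\infty([0,1]^{mp(M+1)},\Rb^{m(M+1)})$, which concatenates $M+1$ independent copies of $\vec\Cb_\infty(\cdot|\Ac)$ from Theorem~\ref{weak_conv_emp_copula_process_sets_vectorial}.

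The only non-routine point is the very first step: one must confirm that Theorem 3.6.13 of~\cite{VVW} produces $M+1$ \emph{independent} copies of the Gaussian limit, with the original process $\Db_n$ appearing as one of the marginals, and that independence of the $M$ bootstrap weight vectors among themselves (and from the sample) is the right substitute for the independence statement. This is exactly the content of Theorem~\ref{validity_boot_Dcn}, so the multi-subset extension here is essentially a bookkeeping exercise once the Donsker property of $\Fc_\Ac$ is observed; no new analytic difficulty appears beyond those already handled for a single subset.
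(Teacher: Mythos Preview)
Your proposal is correct and matches the paper's own treatment: the paper does not give a proof for this theorem, stating only that it is ``straightforward to state some extensions of the latter results when dealing with several subsets $A_1,\ldots,A_m$ simultaneously,'' and your argument is precisely the natural bookkeeping exercise that fills in those details. The one notational slip is that your class $\Fc_\Ac$ should be indexed by $\y\in\Rb^p$ rather than $\u\in[0,1]^p$ (as in the paper's $\Gc_0$ in the proof of Theorem~\ref{validity_boot_Dcn}), since the transformations $F_k(\cdot|A_j)$ differ across $j$; but this does not affect the argument.
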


\subsection{The nonparametric bootstrap}

When $\W_n$ is drawn along a multinomial law with parameter $n$ and probabilities $(1/n,\ldots,1/n)$, we recover the original idea of Efron's usual nonparametric bootstrap, here applied to the estimation of the limiting law of $\Db_n(\cdot,A)$. Nonetheless, our final bootstrap counterparts $\widetilde\Cb^*_n(\cdot |A)$ for $\hat\Cb_n(\cdot | A)$ or $\bar \Cb_n(\cdot | A)$ are not the same as the commonly met nonparametric bootstrap processes. 
In particular, our methodology is analytically more demanding
than what is commonly met with nonparametric bootstrap schemes.
Indeed, the usual way of working in the latter case is simply to resample with replacement the initial sample and to recalculate the statistics of interest with the bootstrapped sample exactly in the same manner as with the initial sample. In practical terms, all analytics and IT codes can be reused as many times as necessary without any additional work.
This is not really the case when using the exchangeable bootstrap above, even in the simple case of multinomial weights:
the formulas~(\ref{instrumental_process_boot}) or~(\ref{boot_Cbn_process}) necessitate to ``rework'' the initial estimation procedures. In particular, it is necessary to write our statistics of interest $T_n$ as $T_n=\Phi\big(\Cb_n(\cdot | A)\big)+o_P(1)$ for some regular functional $\Phi$. Thus, the bootstrapped statistic is $T_n^*:=\Phi\big(\tilde \Cb^*_n(\cdot | A)\big)$. Sometimes, specifying $\Phi$ may be boring because of the use of multiple step estimators and/or nuisance parameters.

\mds

This additional stage (the calculation of $\Phi$) can be avoided. Indeed, note that the empirical copula $\bar C_n(\cdot |A)$ may be seen as a regular functional of $F_n$, the usual empirical distribution of $(\X_i,\Z_i)_{i=1,\ldots,n}$, i.e.
$\bar C_n = \psi(F_n)$. Now, it is tempting to apply Efron's initial idea by resampling with replacement $n$ realizations of $(\X,\Z)$ among the initial sample, and to set
$\bar C^*_n = \psi(F^*_n)$, $F_n^*$ being the empirical cdf associated to the bootstrapped sample $(\X_i^*,\Z_i^*)_{i=1,\ldots,n}$. Actually, this standard bootstrap scheme is valid but under slightly stronger conditions than for the exchangeable bootstrap schemes of Section~\ref{exchangeable_bootstrap}. In the case of the usual empirical copula process, the validity of this nonparametric bootstrap has been proven in~\cite{FermaRaduWegkamp} by applying the functional Delta-Method. Similarly, this technique can be applied in our case.

\mds

To be specific, for every $\x\in \Rb^p$, set
$$ F_{n}(\x|A):= \frac{1}{n\hat p_{A}}\sum_{i=1}^n \1(\X_{i} \leq  \x, \Z_i\in A    ),$$
the empirical counterpart of $F(\x|A)$.
Let $F_n$ be the empirical cdf of $(\X_i,\Z_i)_{i=1,\ldots,n}$.
Note that $F_n(\cdot|A)=\chi( F_n)(\cdot)$  for some functional $\chi$ from the space of cadlag functions on $\Rb^{p+q}$, with values in 
the space of cadlag functions on $\Rb^{p}$, and defined by
$$ \chi(F)(\x_0)=\int \1(\x\leq \x_0,\z\in A) \, F(d\x,d\z) \, / \int \1(\z\in A) \, F(d\x,d\z),\; \x_0\in \Rb^p.$$
It is easy to check that the latter function $\chi$ is Hadamard differentiable at every cdf $F$ on $\Rb^{p+q}$ s.t. $\int \1(\z\in A)\, F(d\x,d\z)>0$. Its derivative at $F$ is given by
\begin{eqnarray*}
\lefteqn{ \chi'(F)(h)(\x_0)=
\frac{ \int \1(\x\leq \x_0,\z\in A) \, h(d\x,d\z)}{ \int \1(\z\in A) \, F(d\x,d\z)}   }\\
&-& \Big(\int \1(\x\leq \x_0,\z\in A) \, F(d\x,d\z) \Big)
\frac{ \int \1(\z\in A) \, h(d\x,d\z)}{ \Big(\int \1(\z\in A) \, F(d\x,d\z) \Big)^2} \cdot
\end{eqnarray*}
Moreover, $\bar C_n(\cdot |A)=\phi\big(F_n(\cdot |A)\big)$, introducing a map $\phi$ from the space of cadlag functions on $\Rb^p$ to $\ell^\infty([0,1]^p)$ by
$$ \phi(F)(\u)=F\big( F_1^-(u_1),\ldots,F_p^-(u_p)   \big) .$$
Assume the copula $C(\cdot |A)$ is continuously differentiable on the whole hypercube $[0,1]^p$, a stronger assumption than our Condition~\ref{cond_regularity_copula}, as pointed out by~\cite{segers}. Then, Lemma 2 in~\cite{FermaRaduWegkamp} states that $\phi$ is Hadamard-differentiable tangentially to $C_0([0,1]^d)$, the space of continuous maps on $[0,1]^p$.
By the chain rule (Lemma 3.9.3 in~\cite{VVW}), this means that $\psi=\phi \circ \chi $ is still Hadamard differentiable tangentially to $C_0([0,1]^d)$ and its derivative is
$\psi'(F)=\phi' \big( \chi (F) \big) \circ \chi'(F).$ This is the main condition to apply the Delta-Method for bootstrap (Theorem 3.9.11 in~\cite{VVW}, e.g.).

\mds
The nonparametric bootstrapped empirical copula associated with $\bar C_n(\cdot|A)$ is then defined as
$$ \bar C^*_n (\u|A):=\frac{1}{n \hat p^*_{A}} \sum_{i=1}^n \1 \big(X^*_{i,1} \leq (F^*_{n,1})^{-1}(u_1|A),\ldots,
X^*_{i,p}\leq (F^*_{n,p})^{-1}(u_p|A), \Z^*_{i}\in A  \big),$$
and the associated bootstrapped copula process is given by
$$\bar\Cb^*_n(\u|A):=\sqrt{n}\big( \bar C^*_n(\u | A) - \bar C_n(\u |A) \big),\; \u\in [0,1]^p.$$
Obviously, $F_n^*$ (resp. $F_{n,k}^*$) is the associated empirical cdf (resp. empirical marginal cdfs') associated to the nonparametric bootstrap sample $(\X_i^*,\Z_i^*)_{i=1,\ldots,n}$.
By mimicking the arguments of~\cite{FermaRaduWegkamp}, Theorem 5, it is easy to state the validity of the nonparametric bootstrap scheme for $\bar C_n(\cdot|A)$.
Details are left to the reader.
To simply announce the result, introduce the random map
$$ \Cc_{n,M,A}(\u_0,\ldots,\u_M):=
\big(\bar \Cb_n(\u_0|A),\bar\Cb^*_{n,1}(\u_1|A),\ldots, \bar\Cb^*_{n,M}(\u_M|A) \big),$$
for every vectors $\u_0,\ldots,\u_M$ in $[0,1]^p$.

\mds

\begin{thm}
    If the copula $C(\cdot |A)$ is continuously differentiable on $[0,1]^p$ and $p_A>0$, then the process
    $\Cc_{n,M,A}$ weakly converges in $\ell^\infty([0,1]^{p(M+1)},\Rb^{m(M+1)})$ to a process that
    concatenates $M+1$ independent versions of $\Cb_\infty(\cdot | A)$.
\end{thm}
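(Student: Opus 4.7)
The plan is to apply the functional Delta method for the bootstrap (Theorem 3.9.11 in~\cite{VVW}) to the composed map $\psi = \phi \circ \chi$ that has just been introduced, exactly in the spirit of Theorem 5 in~\cite{FermaRaduWegkamp}. First, I would recall that $F_n$ is the usual empirical cdf of the i.i.d. sample $(\X_i,\Z_i)_{i=1,\ldots,n}$ in $\Rb^{p+q}$, so that by Donsker's theorem $\sqrt{n}(F_n - F)$ weakly tends in $\ell^\infty(\Rb^{p+q})$ to an $F$-Brownian bridge $\Bb_F$. The classical result on Efron's nonparametric bootstrap of the empirical process (Theorem 3.6.1 in~\cite{VVW}), applied independently to $M$ bootstrap samples $(\X_i^{*,k},\Z_i^{*,k})_{i=1,\ldots,n}$, $k\in\{1,\ldots,M\}$, yields the joint weak convergence in $\ell^\infty(\Rb^{p+q})^{M+1}$ of
$$\Big(\sqrt{n}(F_n - F),\,\sqrt{n}(F_n^{*,1} - F_n),\,\ldots,\,\sqrt{n}(F_n^{*,M} - F_n)\Big)$$
to $M+1$ mutually independent copies of $\Bb_F$, in the sense of conditional weak convergence in probability.

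Second, I would verify that $\psi = \phi \circ \chi$ is Hadamard-differentiable at $F$ tangentially to an appropriate subspace. The map $\chi$ is a smooth ratio of two linear bounded functionals of $F$, so it is Hadamard-differentiable at any $F$ with $\int\1(\z\in A)\, F(d\x,d\z) = p_A > 0$, with derivative given in the excerpt. The assumption that $C(\cdot|A)$ is continuously differentiable on the whole hypercube $[0,1]^p$ is precisely what Lemma~2 of~\cite{FermaRaduWegkamp} requires to ensure that $\phi$ is Hadamard-differentiable at $F(\cdot|A)$ tangentially to $C_0([0,1]^p)$. Combining both via the chain rule (Lemma 3.9.3 in~\cite{VVW}) gives the Hadamard differentiability of $\psi$ tangentially to the set of $h$ such that $\chi'(F)(h) \in C_0([0,1]^p)$; this set contains (almost every trajectory of) $\Bb_F$ because the conditional margins $F_k(\cdot | \Z\in A)$ are continuous.

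Third, the functional Delta method for the bootstrap (Theorem 3.9.11 in~\cite{VVW}), applied component-wise to the joint convergence of step one and the differentiability of step two, gives the joint weak convergence of
$$\Big(\sqrt{n}\big(\psi(F_n)-\psi(F)\big),\,\sqrt{n}\big(\psi(F_n^{*,1})-\psi(F_n)\big),\,\ldots,\,\sqrt{n}\big(\psi(F_n^{*,M})-\psi(F_n)\big)\Big)$$
towards $M+1$ independent copies of $\psi'(F)(\Bb_F)$, in $\ell^\infty([0,1]^p)^{M+1}$. By construction, the first coordinate equals $\bar\Cb_n(\cdot|A)$ and the subsequent coordinates equal $\bar\Cb^*_{n,k}(\cdot|A)$. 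Since Theorem~\ref{weak_conv_emp_copula_process_sets} already identifies the marginal weak limit of $\bar\Cb_n(\cdot|A)$ as $\Cb_\infty(\cdot|A)$, uniqueness of the limiting law forces $\psi'(F)(\Bb_F) \stackrel{d}{=} \Cb_\infty(\cdot|A)$, which concludes the identification of the limit.

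The main obstacle is the tangential direction: one must check that the trajectories of $\chi'(F)(\Bb_F)$ lie in $C_0([0,1]^p)$ almost surely, so that the chain rule can legitimately be invoked and the Delta method applied. This is where the stronger smoothness assumption on $C(\cdot|A)$ (compared to Condition~\ref{cond_regularity_copula}) really matters, as already observed in~\cite{segers} and~\cite{FermaRaduWegkamp}; everything else is a routine, if slightly tedious, adaptation of those arguments to the presence of the conditioning event $\{\Z\in A\}$, where only the extra factor $\1(\z\in A)$ and the random normalizer $\hat p_A$ have to be tracked through the calculations of $\chi'(F)$.
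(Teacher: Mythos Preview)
Your proposal is correct and follows essentially the same route as the paper: the paper itself only sketches the argument, writing that one should mimic Theorem~5 of~\cite{FermaRaduWegkamp} by exhibiting $\bar C_n(\cdot|A)=\psi(F_n)$ with $\psi=\phi\circ\chi$, checking Hadamard differentiability via Lemma~2 of~\cite{FermaRaduWegkamp} and the chain rule, and then invoking the functional Delta method for the bootstrap (Theorem~3.9.11 in~\cite{VVW}). You have supplied exactly these details, including the correct identification of the tangential-direction issue that motivates the stronger smoothness hypothesis on $C(\cdot|A)$.
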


\mds

As for the exchangeable bootstrap case, we can extend the latter results when dealing with several subsets $A_1,\ldots,A_m$ simultaneously.
Then,
still considering $m$ borelian subsets in $\Ac=\{A_1,\ldots,A_m\}$, for every
$\vec\u_j:=(\u_{j,1},\ldots,\u_{j,m})$, $\u_{j,k}\in [0,1]^p$ for every $j\in \{0,1,\ldots,M\}$, $k\in \{1,\ldots,m\}$, set
$$ \vec \Eb^*_{n,j}(\vec\u_j,\Ac):=\big(\bar \Cb^*_n(\u_{j,1},A_1),\ldots,\bar \Cb^*_n(\u_{j,m},A_m)\big),\;\text{and}$$
$$ \vec\Ec_{n,M,\Ac}(\vec\u_0,\ldots,\vec\u_M):=
\big(\vec \Cb_n(\vec\u_0|\Ac),\vec\Eb^*_{n,1}(\vec\u_1|\Ac),\ldots, \vec\Eb^*_{n,M}(\vec\u_M|\Ac) \big).$$

\mds
\begin{thm}
If the copulas $C(\cdot |A_k)$ are continuously differentiable on $[0,1]^p$ and $p_{A_k}>0$ for every $k\in\{1,\ldots,m\}$, then,
for every $M\geq 2$ and when $n\rightarrow \infty$, the process
    $\vec\Ec_{n,M,\Ac}$ weakly converges in $\ell^\infty([0,1]^{p(M+1)m},\Rb^{M+1})$ to a process that concatenates $M+1$ independent versions of $\vec\Cb_\infty(\cdot |\Ac)$.
    \label{prop:nonparametric_bootstrap_copula}
\end{thm}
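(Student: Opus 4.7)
The plan is to lift the single--subset argument preceding this theorem to the multi--subset setting by packaging all $m$ conditional copulas into one vector--valued Hadamard--differentiable functional of the joint empirical cdf, and then invoking the functional Delta method for the bootstrap (Theorem 3.9.11 in~\cite{VVW}) a single time.

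First, for each $k\in\{1,\ldots,m\}$ I would reuse the maps $\chi_{A_k}$ and $\phi$ already introduced, so that $\bar C_n(\cdot|A_k)=\psi_k(F_n)$ with $\psi_k:=\phi\circ\chi_{A_k}$. Under $p_{A_k}>0$ and continuous differentiability of $C(\cdot|A_k)$ on $[0,1]^p$, each $\psi_k$ is Hadamard differentiable at the true cdf $F$ tangentially to $C_0([0,1]^p)$, with derivative obtained by the chain rule as in the preceding theorem. I would then stack them into
$$ \Psi(F):=\bigl(\psi_1(F),\ldots,\psi_m(F)\bigr),$$
viewed as a map into $\ell^\infty([0,1]^{mp},\Rb^m)$. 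Since Hadamard differentiability is preserved under finite Cartesian products with the derivative being the tuple of component derivatives, $\Psi$ is Hadamard differentiable at $F$ tangentially to the intersection of the $m$ tangent spaces, which still contains the continuous paths that carry the limit of $\sqrt n(F_n-F)$.

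Second, I would apply the functional Delta method for the bootstrap to $\Psi$. Since the usual nonparametric bootstrap for the empirical cdf $F_n$ is unconditionally consistent in $\ell^\infty(\Rb^{p+q})$, the Delta method yields that, conditionally on the data, $\sqrt n(\Psi(F_n^*)-\Psi(F_n))$ converges weakly to an independent copy of the limit $\vec\Cb_\infty(\cdot|\Ac)$ identified in Theorem~\ref{weak_conv_emp_copula_process_sets_vectorial}. Drawing $M$ independent bootstrap samples and using the standard argument that, given the data, the $M$ replicates are i.i.d.\ (see, e.g., Section~10.1 in~\cite{kosorok}), the full process $\vec\Ec_{n,M,\Ac}$ then converges jointly to the concatenation of $M+1$ independent copies of $\vec\Cb_\infty(\cdot|\Ac)$, exactly as claimed.

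The main technical hurdle is purely bookkeeping: one must verify that the Gaussian limit produced by $\Psi'(F)$ acting on the $F$-Brownian bridge has exactly the covariance structure of $\vec\Cb_\infty(\cdot|\Ac)$, including the cross--set covariances. This reduces to the chain rule $\psi_k'(F)=\phi'(\chi_{A_k}(F))\circ\chi_{A_k}'(F)$ together with the elementary identity $\Eb[\1(\Z\in A_j)\1(\Z\in A_k)]=\Pb(\Z\in A_j\cap A_k)$, which produces precisely the indicator overlaps in~(\ref{general_cov_B}); no new analytical ingredient beyond the single--subset proof is required. Details, being entirely parallel to the single--subset case of~\cite{FermaRaduWegkamp}, Theorem~5, are left to the reader as in the statement preceding this result.
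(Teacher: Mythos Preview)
Your proposal is correct and follows precisely the route the paper intends: the paper gives no explicit proof of this theorem, having already indicated for the single--subset case that one ``mimics the arguments of~\cite{FermaRaduWegkamp}, Theorem~5'' via the Hadamard chain rule $\psi=\phi\circ\chi$ and the functional Delta method for bootstrap (Theorem~3.9.11 in~\cite{VVW}); the multi--subset version is then obtained exactly as you describe, by stacking the $\psi_k$ into a single vector--valued functional and exploiting that Hadamard differentiability is stable under finite Cartesian products. Your remark that the only additional bookkeeping concerns the cross--set covariances~(\ref{general_cov_B}) is also on target, and the passage from conditional to joint unconditional convergence of the $M+1$ components can equivalently be handled by Lemma~2.2 in~\cite{KojaBuecher}, as the paper does in the exchangeable--bootstrap section.
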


\mds

\section{Application to Generalized dependence measures}
\label{dependence_measures}

\subsection{A single conditioning subset}

Dependence measures (also called ``measures of concordance'' or ``measures of association'' by some authors; see~\cite{Nelsen1999}, Def. 5.1.7.) are real numbers that summarize the amount of dependencies across the components of a random vector. Most of the time, they are defined for bivariate vectors, as originally formalized in~\cite{scarsini1984}. The most usual ones are Kendall's tau, Spearman's rho, Gini's measures of association and Blomqvist's beta.
Denoting by $C$ the copula of a bivariate random vector $(X_1,X_2)$, all these measures can be rewritten as weighted sums of quantities as
$\rho_1(\psi,\alpha):=\int \psi(u,v)\, C^\alpha(u,v ) C(du,dv )$ for some measurable map $\psi:[0,1]^2\rightarrow \Rb$, $\alpha\geq 0$,
or as $\rho_2(\psi,\alpha,\mu):=\int \psi(u,v)\, C^\alpha(u,v ) \mu(du,dv )$ for some measure $\mu$ on $[0,1]^2$.
For example, in the case of Kendall's tau (resp. Spearman's rho), the first case (resp. second case) applies by setting $\psi=1$ and $\alpha=1$ (resp. $\alpha=1$, $\mu(du,dv)=du\, dv$).
Gini's index is $\rho_1(\psi_G,0)$, with $\psi_G(u,v):=2\big(|u+v-1| - |u-v| \big)$.
Blomqvist's beta is obtained with $\rho_2(1,1,\delta_{(1/2,1/2)})$, where $\delta_{(1/2,1/2)}$ denotes the Dirac measure at $(1/2,1/2)$.
See~\cite{Nelsen1999}, Chapter 5, or~\cite{Nelsen2002} for some justifications of the latter results and additional results.

\mds

A few multivariate extensions of the latter measures have been introduced in the literature for many years.
The axiomatic justification of such measures for $p$-dimensional random vectors has been developed in~\cite{Taylor}, and many proposals followed, sometimes in passing. The most extensive analysis has been led in a series of papers by F. Schmid, R. Schmidt and some co-authors: c.f.~\cite{SchmidMultivariateSpearman,SchmidBlomqvist,SchmidConditionalSpearman,SchmmidEtAl2010}.

\mds

Actually, we can even more extend the previous ideas by considering general formulas for multivariate dependence measures,
possibly indexed by subsets (of covariates), as in the previous sections.
To be specific, we still consider a random vector $(\X,\Z)\in \Rb^p\times \Rb^q$ and we will be interested in dependence measures between the components of $\X$, when $\Z$ belongs to some borelian subset $A$ in $\Rb^{q}$.
For any (possibly empty) subsets $K$ and $K'$ that are included in $I:=\{1,\ldots,p\}$, let us define
\begin{equation}
\rho_{K,K'}(A)  :=\int \psi(\u)\, C_{K}(\u_K | \Z\in A ) C_{K'}(d\u_{K'} | \Z\in A )\, d\u_{I\setminus K'},
\label{defTau_KK_cond}
\end{equation}
for some measurable function $\psi$. Obviously, $C_{K}(\cdot | \Z\in A )$ denotes the conditional copula of $\X_K:=(X_j, j\in K)$ given $(\Z\in A)$.
In particular, $C_{I}(\u| \Z\in A )=C_{\{1,\ldots,p\}}(\u| \Z\in A )=C_{\X|\Z}(\u | \Z\in A )$, for every $\u\in [0,1]^p$.
When $K'=\emptyset$ (resp. $K'=I$) there is no integration w.r.t. $C_{K'}(d\u_{K'} | \Z\in A )$ (resp. $d\u_{I\setminus K'}$). 

\mds

The latter definition virtually includes and/or extends all unconditional and conditional dependence measures that have been introduced until now.
Indeed, such dependence measures are linear combinations (or even ratios, possibly) of our quantities $\rho_{K,K'}(A)$, for conveniently chosen $(K,K')$ and $\psi$.
Note that, by setting $A=\Rb^q$, we recover unconditional dependence measures. Moreover, setting $A=(\Z=\z)$ allows to study pointwise conditional dependence measures.

\mds

A few examples of such $\rho_{K,K'}(A)$ that have already been met in the literature:
\begin{itemize}
\item $\psi(\u)=1$, $K=K'=I$ and $A=\Rb^q$ provides a multivariate version of the Kendall's taus' of $\X$, that are affine functions of $\int C_{\X}(\u)\, C_{\X}(d\u)$.
See~\cite{Joe1990,GNBG,fermanian2015}, among others;
\item $\psi(\u)=1$, $K=I$, $K'=\emptyset$ and $A=\Rb^q$ yields $\rho_1$, the multivariate Spearmans's rho of $\X$, as in~\cite{SchmidMultivariateSpearman}; see~\cite{wolff1980} too.
\item $\psi(\u)=1$, $K=\emptyset$, $K'=I$ and $A=\Rb^q$ yields the multivariate Spearmans's rho of $\X$ introduced in~\cite{ruymgaart1978}, also called $\rho_2$ in~\cite{SchmidMultivariateSpearman};
\item $\psi(\u)=1$, $K=K'=I$ and a (small) neighborhood of $\z$ as $A$ is similar to a $p$-dimensional extension of the pointwise conditional Kendall's tau studied in~\cite{veraverbeke2011} or~\cite{DerumignyFermanian2019,DerumignyFermanian2020};
\item $\psi(\u)=\prod_{j\in I} 1(u_j\leq 1/2)$, $K=\emptyset$ and $K'=I$ corresponds to a conditional version of Blomqvist coefficient (\cite{Nelsen1999});
\item $\psi(\u)= 1(\u\leq \u_0 )+  1(\u\geq \v_0 )$, $K=\emptyset$ and $K'=I$ yields a conditional version of the tail-dependence coefficient considered in~\cite{SchmidBlomqvist};
\item if $\psi$ is a density on $[0,1]^p$, $K=I$ and $K'=\emptyset$, we get some conditional product measures of concordance, as defined in~\cite{Taylor};
\item when $\psi(\u)$ is a weighted sum of reflection indicators of the type
$$\u \in [0,1]^p \mapsto (\epsilon_1 u_1 + (1-\epsilon_1)(1-u_1),\ldots,(\epsilon_p u_p + (1-\epsilon_p)(1-u_p)\big),$$
where $\epsilon_k\in \{0,1\}$ for every $k\in \{1,\ldots,p\}$,
we obtain some generalizations of dependence measures (Kendall's tau, Blomqvist coefficient, etc), as introduced in~\cite{Joe1990}.
For conveniently chosen weights, such linear combinations of $\rho_{K,K'}(\Rb^q)$ for different subsets $K$ and $K'$ yield dependence measures that are increasing w.r.t. a so-called ``concordance ordering'' property. See~\cite{Taylor}, Examples 7 and 8, too. Etc.
\end{itemize}
Note that our methodology includes as particular cases some multivariate dependence measures that are calculated as averages of ``usual'' dependence measures when they are calculated for many pairs $(X_k,X_l)$, $k,l\in \{1,\ldots,p\}^2$. This old and simple idea (see~\cite{KendallBabington}) has been promoted by some authors.
See such type of multivariate dependence measures in~\cite{SchmmidEtAl2010} and the references therein.

\mds

Generally speaking, it is possible to estimate the latter quantities $\rho_{K,K'}(A)$ after replacing the conditional copulas by their estimates in Equation~(\ref{defTau_KK_cond}).
This yields the estimator
\begin{equation}
\hat\rho_{K,K'}(A)  :=
\int \psi(\u)\, \hat C_{n,K}(\u_K | \Z\in A ) \hat C_{n,K'}(d\u_{K'} | \Z\in A )\, d\u_{I\setminus K'},
\label{defTau_KK_cond_estimated}
\end{equation}
where we define
$$ \hat C_{n,K} (\u_K|A):=\frac{1}{n \hat p_{A}} \sum_{i=1}^n
\1 \big(F_{n,j}(X_{i,j} | A) \leq u_j, \forall j\in K, \Z_i\in A  \big),$$
and similarly for the induced measure $\hat C_{n,K'}(d\u_{K'} | \Z\in A )$.

\mds

Then, the weak convergence of the process $\hat \Cb_n(\cdot|A)=\sqrt{n}(\hat C_n - C)(\cdot|A)$ will provide the
limiting law of $\sqrt{n}\big(\hat \rho_{K,K'}(A) - \rho_{K,K'}(A)\big) $.
Indeed, the map
\begin{equation}
 \Psi_{K,K'}: C\mapsto  \int \psi(\u)\, C_{K}(\u_K ) C_{K'}(d\u_{K'} )\, d\u_{I\setminus K'}
 \label{def_map_Psi}
 \end{equation}
is Hadamard differentiable from $\Cc_p$, the space of cdfs' on $[0,1]^p$, onto $\Rb$.
To prove the latter result, for every $\u_{K'}\in [0,1]^{|K'|}$, denote
$$ \chi(\u_{K'}):=\int \psi(\u)\, C_{K}(\u_K ) \, d\u_{I\setminus K'}.$$

\begin{lemma}
\label{Hadamard_diff_rho}
If $\psi$ is continuous on $[0,1]^p$ and the map $\chi$ is of bounded variation on $[0,1]^{|K'|}$, then the map $\Psi_{K,K'} : \Cc_p \longrightarrow \Rb$ is Hadamard-differentiable at every $p$-dimensional copula $C$, tangentially to the set of real functions that are continuous on $[0,1]^p$.
Its derivative is given by
$$ \Psi_{K,K'}'(C)(h)= \int \psi(\u)\, h_{K}(\u_K ) C_{K'}(d\u_{K'} )\, d\u_{I\setminus K'} + \int \psi(\u)\, C_{K}(\u_K ) h_{K'}(d\u_{K'} )\, d\u_{I\setminus K'},$$
for any continuous map $h:[0,1]^p\rightarrow \Rb$.
\end{lemma}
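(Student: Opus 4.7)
The plan is to exploit the bilinear structure of $\Psi_{K,K'}$ in the pair of marginals $(C_K, C_{K'})$: since the marginalization maps $C \mapsto C_K$ and $C \mapsto C_{K'}$ are linear and continuous in sup-norm, the differentiability of $\Psi_{K,K'}$ reduces to analyzing an integration-type bilinear functional. Fix $t_n \to 0$ and $h_n \to h$ uniformly on $[0,1]^p$ with $h$ continuous and each $C + t_n h_n$ in the ambient space of cdfs on $[0,1]^p$. A direct bilinear expansion of the product $(C + t_n h_n)_K \cdot (C + t_n h_n)_{K'}$ yields
\begin{equation*}
\Psi_{K,K'}(C + t_n h_n) - \Psi_{K,K'}(C) = t_n (T_{1,n} + T_{2,n}) + t_n^2 T_{3,n},
\end{equation*}
where $T_{1,n} := \int \psi(\u)\, h_{n,K}(\u_K)\, C_{K'}(d\u_{K'})\, d\u_{I\setminus K'}$, $T_{2,n} := \int \chi(\u_{K'})\, h_{n,K'}(d\u_{K'})$, and $T_{3,n} := \int \psi(\u)\, h_{n,K}(\u_K)\, h_{n,K'}(d\u_{K'})\, d\u_{I\setminus K'}$. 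It then suffices to show that $T_{1,n}$ and $T_{2,n}$ converge to the two summands appearing in the claimed $\Psi_{K,K'}'(C)(h)$, while $t_n T_{3,n} \to 0$.

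The term $T_{1,n}$ is handled by bounded convergence: the marginal $h_{n,K}$ inherits uniform convergence to $h_K$, while $\psi$ is bounded and the product measure $C_{K'}(d\u_{K'}) \otimes d\u_{I\setminus K'}$ is finite on the compact hypercube. For $T_{2,n}$, the bounded variation assumption on $\chi$ enters decisively: I would apply a multidimensional integration-by-parts formula to rewrite $T_{2,n}$ as boundary evaluations of $\chi\, h_{n,K'}$ (each a continuous functional of $h_{n,K'}$ in sup-norm) minus $\int h_{n,K'}\, d\chi$ against the finite signed measure generated by $\chi$. Uniform convergence $h_{n,K'} \to h_{K'}$ together with bounded convergence deliver the limit, and this limiting value is the canonical definition of $\int \chi\, dh_{K'}$ for the merely continuous $h_{K'}$, which matches the second summand of the stated derivative.

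The main obstacle is controlling the quadratic remainder $t_n T_{3,n}$: the signed measure $h_{n,K'}(d\u_{K'})$ has total variation of order $1/t_n$ (since $h_{n,K'}$ is, up to scaling, a difference of probability cdfs), so a naive bound leaves $t_n T_{3,n}$ only of order $O(1)$. The key observation is that $t_n h_{n,K'}(d\u_{K'}) = (C_{n,K'} - C_{K'})(d\u_{K'})$ with $C_n := C + t_n h_n$, giving
\begin{equation*}
t_n T_{3,n} = \int \tilde\chi_n(\u_{K'})\, (C_{n,K'} - C_{K'})(d\u_{K'}), \qquad \tilde\chi_n(\u_{K'}) := \int \psi(\u)\, h_{n,K}(\u_K)\, d\u_{I\setminus K'}.
\end{equation*}
Splitting $\tilde\chi_n = \tilde\chi + (\tilde\chi_n - \tilde\chi)$, where $\tilde\chi$ is the analogous integral with $h_K$ replacing $h_{n,K}$, the correction contribution is bounded by $\|\tilde\chi_n - \tilde\chi\|_\infty \cdot \|C_{n,K'} - C_{K'}\|_{\mathrm{TV}}$ and tends to zero because $\|\tilde\chi_n - \tilde\chi\|_\infty \to 0$ (uniform convergence of $h_{n,K}$ and boundedness of $\psi$) while the second factor is uniformly bounded by $2$ as a difference of probability measures; the main contribution $\int \tilde\chi\, d(C_{n,K'} - C_{K'})$ tends to zero by the Portmanteau theorem, since $\tilde\chi$ is continuous on a compact set and $C_{n,K'} \to C_{K'}$ uniformly, hence weakly. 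Collecting the three limits delivers the claimed formula for $\Psi_{K,K'}'(C)(h)$.
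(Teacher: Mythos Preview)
Your argument is correct and follows the same bilinear expansion as the paper's proof: both write $\Psi_{K,K'}(C+t_n h_n)-\Psi_{K,K'}(C)$ as a sum of two linear terms plus a quadratic remainder, handle the first linear term by uniform convergence of $h_{n,K}$ against the finite measure $C_{K'}(d\u_{K'})\,d\u_{I\setminus K'}$, and handle the second linear term via the multivariate integration-by-parts formula against the signed measure generated by the bounded-variation function $\chi$.

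The only genuine difference lies in the treatment of the quadratic remainder. The paper embeds $\Cc_p$ into an auxiliary space $BV_{M,p}$ of Hardy--Krause bounded-variation functions, splits $h_{n,K}=(h_{n,K}-h_K)+h_K$, controls the first piece by the bound $\|t_n h_n\|_{BV}\leq 2M$, and for the second piece approximates the continuous function $\u\mapsto\psi(\u)h_K(\u_K)$ by step functions on hyper-rectangles, exploiting only that $h_n$ stays bounded. Your route stays on $\Cc_p$, rewrites $t_n h_{n,K'}(d\u_{K'})=(C_{n,K'}-C_{K'})(d\u_{K'})$ as a difference of probability measures (total variation $\leq 2$), and then invokes Portmanteau for the continuous integrand $\tilde\chi$. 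Your argument is slightly more economical---it avoids the detour through $BV_{M,p}$ and replaces the explicit step-function construction by a one-line weak-convergence appeal---while the paper's version has the minor advantage of yielding Hadamard differentiability on the larger domain $BV_{M,p}$ rather than just $\Cc_p$. For the lemma as stated (domain $\Cc_p$), both are complete.
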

When $h$ is not of bounded variation, we define the second integral of $\Psi_{K,K'}'(C)(h)$ by an integration by parts, as detailed in~\cite{RWZ}.
See the proof of Lemma~\ref{Hadamard_diff_rho} in the appendix, Section~\ref{sec:proof:Hadamard_diff_rho}.

\mds

As a consequence, by applying the Delta Method (Theorem 3.9.4 in~\cite{VVW}) to the copula process 
$\sqrt{n}\big(\hat C_n(\cdot | A)-  C(\cdot|A)\big)$, we obtain the asymptotic normality of $\hat\rho_{K,K'}(A)$.

\begin{thm}
\label{AN_generalized_dependence_measures}
Under the assumptions of Theorem~\ref{weak_conv_emp_copula_process_sets} and Lemma~\ref{Hadamard_diff_rho},
$$ \sqrt{n}\big( \hat\rho_{K,K'}(A) -  \rho_{K,K'}(A) \big) \weakly \Nc\big(0, \sigma^2_{K,K'}(A)   \big),$$
\begin{eqnarray*}
\lefteqn{  \sigma^2_{K,K'}(A)  := \var \Big(
\int \psi(\u)\, \Cb_{\infty,K}(\u_K |A ) C_{K'}(d\u_{K'} |A )\, d\u_{I\setminus K'} }\\
&+& \int \psi(\u)\, C_{K}(\u_K | A ) \Cb_{\infty,K'}(d\u_{K'} |A )\, d\u_{I\setminus K'}\Big).
\end{eqnarray*}
\end{thm}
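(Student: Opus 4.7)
The plan is to realize $\hat\rho_{K,K'}(A)-\rho_{K,K'}(A)$ as the image under a Hadamard-differentiable functional of the conditional empirical copula process, and then to invoke the functional Delta Method.

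First, I would observe that, by construction, $\hat\rho_{K,K'}(A)=\Psi_{K,K'}\bigl(\hat C_n(\cdot\mid A)\bigr)$ and $\rho_{K,K'}(A)=\Psi_{K,K'}\bigl(C(\cdot\mid A)\bigr)$, where $\Psi_{K,K'}$ is the map defined in~\eqref{def_map_Psi}. Note that the marginal copulas $\hat C_{n,K}$ and $C_K$ arise as projections of $\hat C_n(\cdot\mid A)$ and $C(\cdot\mid A)$ obtained by setting the coordinates outside $K$ equal to one, so they are continuous linear functions of the full copula and do not require separate treatment. Theorem~\ref{weak_conv_emp_copula_process_sets} guarantees that $\sqrt{n}\bigl(\hat C_n(\cdot\mid A)-C(\cdot\mid A)\bigr)=\hat\Cb_n(\cdot\mid A)$ converges weakly in $\ell^\infty([0,1]^p)$ toward the centered Gaussian process $\Cb_\infty(\cdot\mid A)$, whose sample paths are almost surely continuous on $[0,1]^p$ (being the image, under a continuous linear map, of a Brownian bridge with continuous trajectories on the compact cube).

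Second, I would apply Lemma~\ref{Hadamard_diff_rho}, which states that $\Psi_{K,K'}$ is Hadamard-differentiable at $C(\cdot\mid A)$ tangentially to $C_0([0,1]^p)$. Since the limit $\Cb_\infty(\cdot\mid A)$ takes values almost surely in this tangent space, the functional Delta Method (Theorem~3.9.4 in~\cite{VVW}) directly yields
\begin{equation*}
\sqrt{n}\bigl(\hat\rho_{K,K'}(A)-\rho_{K,K'}(A)\bigr) \weakly \Psi_{K,K'}'\bigl(C(\cdot\mid A)\bigr)\bigl(\Cb_\infty(\cdot\mid A)\bigr).
\end{equation*}
The explicit form of $\Psi_{K,K'}'$ in Lemma~\ref{Hadamard_diff_rho} shows that the right-hand side equals
\begin{equation*}
\int \psi(\u)\,\Cb_{\infty,K}(\u_K\mid A)\,C_{K'}(d\u_{K'}\mid A)\,d\u_{I\setminus K'}
+\int \psi(\u)\,C_K(\u_K\mid A)\,\Cb_{\infty,K'}(d\u_{K'}\mid A)\,d\u_{I\setminus K'}.
\end{equation*}

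Third, because $\Psi_{K,K'}'\bigl(C(\cdot\mid A)\bigr)$ is a continuous linear functional of its argument and $\Cb_\infty(\cdot\mid A)$ is a centered Gaussian process, the image is a centered Gaussian random variable. Its variance is, by definition, the quantity $\sigma_{K,K'}^2(A)$ displayed in the statement, which delivers the claimed normal limit.

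The main obstacle, and essentially the only subtlety, is the second integral when $\Cb_{\infty,K'}(\cdot\mid A)$ fails to be of bounded variation: one has to interpret $\int \psi\,C_K\,d\Cb_{\infty,K'}$ via integration by parts, exactly as in~\cite{RWZ} and as noted right after Lemma~\ref{Hadamard_diff_rho}. This is why the hypothesis that $\chi$ is of bounded variation is needed: once the boundary terms are absorbed using $\chi$, the remaining integrand is of bounded variation and the limit random variable is well-defined. All other steps are standard applications of Hadamard differentiability and the Delta Method.
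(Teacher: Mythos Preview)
Your proof is correct and follows exactly the paper's approach: the paper simply states that the result follows by applying the functional Delta Method (Theorem~3.9.4 in~\cite{VVW}) to the copula process $\sqrt{n}\big(\hat C_n(\cdot\mid A)-C(\cdot\mid A)\big)$, using the weak convergence of Theorem~\ref{weak_conv_emp_copula_process_sets} and the Hadamard differentiability of Lemma~\ref{Hadamard_diff_rho}. Your write-up adds welcome detail (continuity of the limiting trajectories, the integration-by-parts interpretation of the second integral), but the strategy is identical.
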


\mds

As an example, let us consider the multivariate Spearman's rho obtained when setting
$\psi(\u)=1$, $K=I$, $K'=\emptyset$, $p=q$, $\X=\Z$ and $A=\prod_{j=1}^p ]-\infty,a_j]$, for some threshold $(a_1,\ldots,a_p)$ in $\Rb^p$. In other words, we focus on
$$ \rho_S(\a):=\int C_{\X}(\u |  X_j \leq a_j, \forall j\in \{1,\ldots,p\} )  \, \prod_{j=1}^p du_j .$$
This measure is related to the average dependencies among the components of $\X$, knowing that all such components are observed in their own tails. Indeed, we are interested in the joint tail $X_j \leq a_j$ for every $j\in\{1,\ldots,p\}$. Such an indicator has been introduced in~\cite{SchmidMultivariateSpearman} but its properties have not been studied. Indeed, the authors wrote: ``Certainly, this version would be interesting to investigate, too,
although its analytics and the nonparametrical statistical inference are difficult''. Therefore, they prefer to concentrate on other Spearman's rho-type dependence measures. Now, we fill this gap by applying Theorem~\ref{AN_generalized_dependence_measures}.
With our notations, a natural estimator of $\rho_S(\a)$ is
$$ \hat\rho_S(\a):=\int \hat C_{n}(\u |  X_j \leq a_j, \forall j\in \{1,\ldots,p\} )  \, \prod_{j=1}^p du_j .$$

\begin{cor}
If $p_A >0$ and Condition~\ref{cond_regularity_copula} holds, then
$$ \sqrt{n}\big( \hat\rho_{S}(\a) -  \rho_{S}(\a) \big) \weakly \Nc\big(0, \sigma^2_{S}(\a)   \big),\;\sigma^2_{S}(\a)  := \int \Eb\big[ \Cb_\infty(\u_1|A)  \Cb_\infty(\u_2|A)\big] \,d\u_1 \, d\u_2.$$
\end{cor}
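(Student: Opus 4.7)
The plan is to derive this corollary as a direct specialization of Theorem~\ref{AN_generalized_dependence_measures} with the choices $\psi \equiv 1$, $K = I = \{1,\ldots,p\}$, $K' = \emptyset$, $\X = \Z$, and $A = \prod_{j=1}^{p} \,]-\infty, a_j]$. First I verify the hypotheses: $p_A > 0$ and Condition~\ref{cond_regularity_copula} are assumed in the statement of the corollary; for Lemma~\ref{Hadamard_diff_rho}, the map $\psi \equiv 1$ is trivially continuous on $[0,1]^p$, and since $K'$ is empty the auxiliary map $\chi$ degenerates to a scalar, making the bounded-variation requirement vacuous. With these verifications, Theorem~\ref{AN_generalized_dependence_measures} applies and delivers a central limit theorem with some asymptotic variance; it then remains to evaluate that variance explicitly.

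Next I specialize the derivative formula of Lemma~\ref{Hadamard_diff_rho}. Following the convention fixed in the paper that ``when $K' = \emptyset$ there is no integration with respect to $C_{K'}(d\u_{K'}|\Z\in A)$'', the second summand in the expression for $\Psi'_{K,K'}(C)(h)$ disappears entirely, while the first summand reduces to $\int_{[0,1]^p} h(\u)\, d\u$ because $K = I$ and $\psi \equiv 1$. Therefore the asymptotic variance produced by Theorem~\ref{AN_generalized_dependence_measures} becomes
\[
\sigma^2_S(\a) \;=\; \var\Bigl(\int_{[0,1]^p} \Cb_\infty(\u|A)\, d\u\Bigr).
\]

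Finally, since $\Cb_\infty(\cdot|A)$ is a centered Gaussian process, this variance equals the second moment of the stochastic integral, and an application of Fubini's theorem yields
\[
\sigma^2_S(\a) \;=\; \Eb\Bigl[\Bigl(\int \Cb_\infty(\u|A)\, d\u\Bigr)^{\!2}\,\Bigr] \;=\; \int\!\!\int \Eb\bigl[\Cb_\infty(\u_1|A)\,\Cb_\infty(\u_2|A)\bigr]\, d\u_1\, d\u_2,
\]
which is the claimed expression. The only point requiring a brief word of justification is the Fubini interchange between expectation and the double Lebesgue integral; this is routine because integration is over the compact hypercube $[0,1]^p$ and $\Cb_\infty(\cdot|A)$ inherits from the underlying Brownian bridge sample paths that are bounded in $\ell^\infty([0,1]^p)$ almost surely, so the integrand is integrable on the product space. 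There is no real obstacle in this proof beyond carefully bookkeeping the empty-set convention in the formulas of Lemma~\ref{Hadamard_diff_rho}.
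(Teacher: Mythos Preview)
Your proposal is correct and matches the paper's approach exactly: the corollary is stated as a direct application of Theorem~\ref{AN_generalized_dependence_measures} with the choices $\psi\equiv 1$, $K=I$, $K'=\emptyset$, and the paper itself notes that $\sqrt{n}\big(\hat\rho_S(\a)-\rho_S(\a)\big)=\int \hat\Cb_n(\u|A)\,d\u$, which is precisely the linearization you obtain from Lemma~\ref{Hadamard_diff_rho} after specializing. Your verification of the hypotheses (continuity of $\psi$, vacuity of the bounded-variation condition when $K'=\emptyset$) and the Fubini step are appropriate and fill in details the paper leaves implicit.
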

The analytic formula of $\Eb\big[ \Cb_\infty(\u_1|A)  \Cb_\infty(\u_2|A)\big]$ is provided in Appendix~\ref{covariances}.
The asymptotic variance $\sigma^2_{S}(\a)$ can be consistently estimated after replacing the unknown quantities 
$C(\cdot |A)$, $p_A$, $D(\cdot,A)$ and its partial derivatives by 
some empirical counterparts, as in the latter appendix. 
Alternatively, the limiting law of $\sqrt{n}\big( \hat\rho_{S}(\a) -  \rho_{S}(\a) \big)$ can be obtained by several bootstrap schemes, as explained in Section~\ref{bootstrap_approx}. Indeed, since $ \sqrt{n}\big( \hat\rho_{S}(\a) -  \rho_{S}(\a) \big) = \int \hat\Cb_n(\u|A)\, d\u$, a bootstrap equivalent of the latter statistics is
$\int \tilde\Cb^*_n(\u,A)\, d\u$ or $ \int \bar\Cb^*_n(\u,A)\, d\u$, with the same notations as above and conveniently chosen bootstrap weights.

\mds

\subsection{Multiple conditioning subsets}

Important practical questions arise considering several borelian subsets simultaneously. For instance, is the amount of dependencies among the $\X$'s components the same 
when $\Z$ belongs to different subsets? This questioning can lead to a way of building relevant subsets $A_j$, $j\in \{1,\ldots,p\}$. 
Typically, a nice partition of the $\Z$-space is obtained when the copulas $C(\dot | \Z\in A_j)$ are heterogeneous. 
This is why we now extend the previous framework to be able to answer such questions.

\mds
To this goal, let $\Ac:=\{A_1,\ldots,A_m\}$ be a family of borelian subsets, $p_{A_j}>0$ for every $j\in \{1,\ldots,m\}$. Moreover, denote 
by $K_j,K'_j$, $j\in \{1,\ldots,m\}$ some subsets of indices in $I=\{1,\ldots,p\}$.  
To lighten notations, set 
$$\rho_j:=\int \psi_j(\u)\, C_{K_j}(\u_{K_j} | \Z\in A_j ) C_{K_j'}(d\u_{K_j'} | \Z\in A_j )\, d\u_{I\setminus K_j'} ,\;\text{and}$$
$$ \hat\rho_j  := \int \psi_j(\u)\, \hat C_{n,K_j}(\u_{K_j} | \Z\in A_j ) \hat C_{n,K_j'}(d\u_{K_j'} | \Z\in A_j )\, d\u_{I\setminus K_j'},$$
 for every $j$. Note that we allow different measurable maps $\psi_j$.
 
\mds

As above, we can deduce the asymptotic law of
$ \sqrt{n}\big(\hat \rho_1- \rho_1, \ldots,\hat \rho_m- \rho_m\big) ,$
from the weak convergence of the random vectorial process $\vec\u\mapsto \vec \Cb_n(\vec\u,\Ac)$ (Theorem~\ref{weak_conv_emp_copula_process_sets_vectorial}).
Denote by $\vec\Psi$ the map from $\Cc_p^m$ to $\Rb^m$ defined by
$$\vec \Psi (C_1,\ldots,C_m)=\big(\Psi_1(C_1),\ldots,\Psi_m(C_m)\big),$$
\begin{equation}
 \Psi_j: C\mapsto  \int \psi_j(\u)\, C_{K_j}(\u_{K_j} ) C_{K_j'}(d\u_{K_j'} )\, d\u_{I\setminus K_j'}.
 \label{def_map_Psi_extended}
 \end{equation}
Moreover, set $ \chi_j(C,\u_{K_j'}):=\int \psi_j(\u)\, C_{K_j}(\u_{K_j} ) \, d\u_{I\setminus K_j'}$ for every cdf $C$ on $[0,1]^p$. 
The next lemma is a straightforward extension of Lemma~\ref{Hadamard_diff_rho}.
Denote $\vec C:=(C_1,\ldots,C_m)$, for a given set of $m$ copulas $C_j$ on $[0,1]^p$.

\mds
\begin{lemma}
\label{Hadamard_diff_rho_extended}
If, for every $j\in \{1,\ldots,m\}$, the map $\psi_j$ is continuous on $[0,1]^p$ and $\chi_j(C_j,\cdot)$ is of bounded variation on $[0,1]^{|K_j'|}$, 
then $\vec\Psi$ is Hadamard-differentiable at $\vec C$, tangentially 
to the set of real functions that are continuous on $[0,1]^{mp}$.
Its derivative is given by
$$ \vec\Psi'(\vec C)(\vec h)=\Big(\Psi_{K_1,K_1'}'(C_1)(h_1),\ldots,\Psi_{K_m,K_m'}'(C_m)(h_m) \Big),$$
for any continuous map $\vec h:=(h_1,\ldots,h_m)$, $h_j:[0,1]^p\rightarrow \Rb$ for every $j$.
\end{lemma}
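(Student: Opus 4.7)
The plan is to reduce to the componentwise version established in Lemma~\ref{Hadamard_diff_rho}. Since $\vec\Psi$ has a diagonal (product) structure --- the $j$-th coordinate $\Psi_j(C_1,\ldots,C_m)$ depends only on $C_j$ --- Hadamard differentiability at $\vec C = (C_1,\ldots,C_m)$ decouples coordinate by coordinate.

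First, I would fix a sequence $t_n \to 0$ in $\Rb$ and perturbations $\vec h_n = (h_{n,1},\ldots,h_{n,m})$ such that $\vec C + t_n \vec h_n \in \Cc_p^m$ for all $n$ and $\vec h_n \to \vec h = (h_1,\ldots,h_m)$ in the relevant sup-norm topology, with every $h_j:[0,1]^p\to\Rb$ continuous. Convergence of $\vec h_n$ to $\vec h$ in $\ell^\infty$ is equivalent to $h_{n,j} \to h_j$ uniformly on $[0,1]^p$ for each $j\in\{1,\ldots,m\}$, with $h_j$ continuous. Second, I apply Lemma~\ref{Hadamard_diff_rho} to $\Psi_{K_j,K_j'}$ at $C_j$ tangentially to continuous maps on $[0,1]^p$: the hypotheses on $\psi_j$ and on $\chi_j(C_j,\cdot)$ are imposed by assumption for each $j$, so
\begin{equation*}
\frac{\Psi_{K_j,K_j'}(C_j + t_n h_{n,j}) - \Psi_{K_j,K_j'}(C_j)}{t_n} \longrightarrow \Psi_{K_j,K_j'}'(C_j)(h_j)
\end{equation*}
in $\Rb$, for every $j$.

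Third, concatenating these $m$ scalar limits in $\Rb^m$ gives
\begin{equation*}
\frac{\vec\Psi(\vec C + t_n \vec h_n) - \vec\Psi(\vec C)}{t_n} \longrightarrow \bigl(\Psi_{K_1,K_1'}'(C_1)(h_1),\ldots,\Psi_{K_m,K_m'}'(C_m)(h_m)\bigr),
\end{equation*}
which is precisely the claimed $\vec\Psi'(\vec C)(\vec h)$. Linearity of $\vec h \mapsto \vec\Psi'(\vec C)(\vec h)$ and its continuity on the space of continuous tuples are inherited coordinatewise from the corresponding properties of each $\Psi_{K_j,K_j'}'(C_j)$ proved in Lemma~\ref{Hadamard_diff_rho}.

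There is no genuine obstacle here: the statement is essentially the remark that a map with a diagonal structure is Hadamard-differentiable if and only if each coordinate is, with derivative assembled coordinate by coordinate. The only mild care needed is to observe that convergence in $\ell^\infty([0,1]^{mp},\Rb^m)$ translates into uniform convergence of each component $h_{n,j}$ on $[0,1]^p$, so that the scalar Lemma~\ref{Hadamard_diff_rho} can be invoked independently for each $j$ without any compatibility condition across coordinates.
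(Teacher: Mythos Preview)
Your proposal is correct and matches the paper's approach: the paper does not give an explicit proof but simply states that the lemma is ``a straightforward extension of Lemma~\ref{Hadamard_diff_rho}'', which is precisely the coordinatewise reduction you carry out. Your observation that the diagonal structure of $\vec\Psi$ decouples the Hadamard-differentiability problem into $m$ independent scalar ones is exactly the intended argument.
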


In the latter result, we have implicitly assumed that $\Psi_{K_j,K_j'}$ involves the $\psi_j$ function.
By the Delta method, we deduce the joint asymptotic normality of our statistics of interest.

\begin{thm}
\label{AN_generalized_extended}
If $p_{A_j} >0$ and Condition~\ref{cond_regularity_copula} holds with $A=A_j$, for every $j\in\{1,\ldots,m\}$, and 
under the assumptions of Lemma~\ref{Hadamard_diff_rho_extended}, then
$$ \sqrt{n}\big( \hat\rho_1 -  \rho_1,\ldots, \hat\rho_m -  \rho_m \big) \weakly \Nc\big(\0_m,\Sigma  \big),$$
where the components of the $m\times m$ matrix $\Sigma=[\Sigma_{k,l}]_{1\leq k,l \leq m}$ are
\begin{eqnarray*}
\lefteqn{  \Sigma_{k,l} := \int \psi_j(\u) \psi_k(\v) 
\Eb\Big[ 
\big\{  \Cb_{\infty,K_j}(\u_{K_j} |A_j ) C_{K_j'}(d\u_{K_j'} |A_j ) +  C_{K_j}(\u_{K_j} | A_j ) \Cb_{\infty,K_j'}(d\u_{K_j'} |A_j ) \big\}  }\\
&&\times  \big\{  \Cb_{\infty,K_k}(\v_{K_k} |A_k ) C_{K_k'}(d\v_{K_k'} |A_k ) +  C_{K_k}(\v_{K_k} | A_k ) \Cb_{\infty,K_k'}(d\v_{K_k'} |A_k ) \big\}
  \Big] \, d\u_{I\setminus K_j'}\, d\v_{I\setminus K_k'}.
\end{eqnarray*}
\end{thm}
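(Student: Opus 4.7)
The plan is to combine the joint weak convergence of the conditional empirical copula processes established in Theorem~\ref{weak_conv_emp_copula_process_sets_vectorial} with the Hadamard differentiability of the vectorial functional $\vec \Psi$ stated in Lemma~\ref{Hadamard_diff_rho_extended}, and then invoke the functional Delta method to transfer the limit.

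First, I would rewrite the statistics in functional form: for every $j \in \{1,\ldots,m\}$,
$$\hat \rho_j = \Psi_j\bigl( \hat C_n(\cdot | A_j)\bigr) \quad \text{and}\quad \rho_j = \Psi_j\bigl( C(\cdot|A_j)\bigr),$$
so that $\sqrt{n}(\hat\rho_1 - \rho_1,\ldots,\hat\rho_m - \rho_m) = \sqrt{n}\bigl\{ \vec\Psi(\vec{\hat C}_n) - \vec\Psi(\vec C) \bigr\}$, with $\vec{\hat C}_n := \bigl(\hat C_n(\cdot|A_1),\ldots,\hat C_n(\cdot|A_m)\bigr)$ and $\vec C := \bigl(C(\cdot|A_1),\ldots,C(\cdot|A_m)\bigr)$. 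By Theorem~\ref{weak_conv_emp_copula_process_sets_vectorial}, the process $\sqrt{n}(\vec{\hat C}_n - \vec C)$ converges weakly in $\ell^\infty([0,1]^{mp},\Rb^m)$ to $\vec \Cb_\infty(\cdot|\Ac)$, and the sample paths of the latter limit lie almost surely in the subspace of continuous maps on $[0,1]^{mp}$ (it is a continuous linear transform of multivariate Brownian bridges). This is exactly the tangential set required by Lemma~\ref{Hadamard_diff_rho_extended}.

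Second, applying the functional Delta method (Theorem 3.9.4 in~\cite{VVW}) to $\vec \Psi$ at $\vec C$, I obtain
$$ \sqrt{n}\bigl( \hat\rho_1 - \rho_1,\ldots, \hat\rho_m - \rho_m \bigr) \weakly \vec\Psi'(\vec C)\bigl( \vec\Cb_\infty(\cdot|\Ac)\bigr),$$
where, by Lemma~\ref{Hadamard_diff_rho_extended}, the $j$-th component of the right-hand side is
$$ \int \psi_j(\u)\, \Cb_{\infty,K_j}(\u_{K_j}|A_j) \, C_{K_j'}(d\u_{K_j'}|A_j)\, d\u_{I\setminus K_j'} + \int \psi_j(\u)\, C_{K_j}(\u_{K_j}|A_j)\, \Cb_{\infty,K_j'}(d\u_{K_j'}|A_j)\, d\u_{I\setminus K_j'}.$$
Because $\vec\Psi'(\vec C)$ is a continuous linear map and $\vec\Cb_\infty(\cdot|\Ac)$ is a centered Gaussian process, the image is a centered Gaussian random vector in $\Rb^m$. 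Its covariance matrix $\Sigma$ is then obtained by computing $\Eb[(\vec\Psi'(\vec C)(\vec\Cb_\infty))_k \cdot (\vec\Psi'(\vec C)(\vec\Cb_\infty))_l]$ and exchanging expectation with the double integration (justified by Fubini using the boundedness of $\psi_j$ and the finiteness of the covariance kernel of $\vec\Cb_\infty$). This exchange directly yields the expression of $\Sigma_{k,l}$ announced in the statement.

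The main subtlety is the interpretation of the terms involving $\Cb_{\infty,K_j'}(d\u_{K_j'}|A_j)$: the limiting Gaussian process is not of bounded variation, so these integrals have to be understood as integrals by parts, as already indicated after Lemma~\ref{Hadamard_diff_rho} following~\cite{RWZ}. This step is the only genuinely delicate point; it is handled exactly as in the proof of Lemma~\ref{Hadamard_diff_rho} extended coordinate-wise, using the assumption that each $\chi_j(C_j,\cdot)$ is of bounded variation on $[0,1]^{|K_j'|}$. Once this is in place, the vectorial Hadamard differentiability follows directly from the componentwise statement, since $\vec\Psi$ acts on each coordinate independently, and the remainder of the argument is just the Delta method plus Fubini.
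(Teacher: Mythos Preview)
Your proposal is correct and follows exactly the approach of the paper, which simply states ``By the Delta method, we deduce the joint asymptotic normality of our statistics of interest'' after Lemma~\ref{Hadamard_diff_rho_extended}. You have fleshed out that one-line argument in full detail: writing the statistics as $\vec\Psi$ applied to the vector of empirical conditional copulas, invoking Theorem~\ref{weak_conv_emp_copula_process_sets_vectorial} for joint weak convergence, and then applying the functional Delta method (Theorem~3.9.4 in~\cite{VVW}) together with the coordinatewise Hadamard derivative from Lemma~\ref{Hadamard_diff_rho_extended}; your remarks on the continuity of the limiting process and on interpreting the $\Cb_{\infty,K_j'}(d\u_{K_j'}|A_j)$ terms via integration by parts are appropriate and match the paper's treatment following Lemma~\ref{Hadamard_diff_rho}.
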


As an application, let us consider the test of the zero assumption
$$ \Hc_0: C(\cdot |A) \;\text{does not depend on } A \in \Ac,\; \text{or equivalently}$$ 
$$ \Hc_0: C(\u|A_1)=\cdots =C(\u|A_m)  \;\text{for every } \u\in [0,1]^p.$$
This can be tackled through any generalized dependence measure $\hat\rho_{K,K'}(A)$, 
for some fixed subsets $K$ and $K'$ and a unique function $\psi$. 
In other words, with our previous notations, $\rho_j=\rho_{K,K'}(A_j)$ for every $j$.
Indeed, we can build a test statistic of the form
\begin{equation*}
    \Tc := \| (i,j) \mapsto \sqrt{n}\big(\hat\rho_{K,K'}(A_i) - \hat\rho_{K,K'}(A_j) \big)  \|,
\end{equation*}
where $\|$ is any semi-norm on $\Rb^{m^2}$.
For example, define the Cramer-von Mises type statistic
\begin{equation*}
    \Tc_{CvM} := n \sum_{j=2}^m
    \big( \hat\rho_{K,K'}(A_1) - \hat\rho_{K,K'}(A_j) \big)^2,
\end{equation*}
or the Kolmogorov-Smirnov type test statistic
\begin{equation*}
    \Tc_{KS} := \sqrt{n} \max_{j=2, \dots, m}
    \big| \hat\rho_{K,K'}(A_1) - \hat\rho_{K,K'}(A_j) \big|.
\end{equation*}
Note that under the null hypothesis, these test statistics can be rewritten as
\begin{align*}
    \Tc &= \| (i,j) \mapsto \sqrt{n} \big\{
    \hat\rho_{K,K'}(A_i) - \rho_{K,K'}(A_i) + \rho_{K,K'}(A_j) - \hat\rho_{K,K'}(A_j) \big\} \| \nonumber \\
    &= \| (i,j) \mapsto  \sqrt{n} \big\{
    \big(\hat\rho_{K,K'}(A_i) - \rho_{K,K'}(A_i) \big) -
    \big( \hat \rho_{K,K'}(A_j) - \rho_{K,K'}(A_j) \big) \big\}\|
\end{align*}
Therefore, under $\Hc_0$, Theorem~\ref{AN_generalized_extended} tells us that $\Tc$ (once properly rescaled) is weakly convergent. 
Since its limiting law is complex, we advise to use bootstrap approximations to evaluate the asymptotic p-values associate to $\Tc$ in practice, or simply its asymptotic variance.
A bootstrapped version of such tests statistics is
\begin{align*}
    \Tc^* &:= \| (i,j) \mapsto  \sqrt{n} \big\{ \hat\rho_{K,K'}^*(A_i) - \hat\rho_{K,K'}(A_i)
    + \hat\rho_{K,K'}(A_j) - \hat\rho^*_{K,K'}(A_j) \big\} \| ,
\end{align*}
where, in the case of the multiplier bootstrap, we set
$$    \hat\rho^*_{K,K'}(A)  :=
    \int \psi(\u)\, \widetilde C_{n,K}^*(\u_K | \Z\in A )
    \widetilde C_{n,K'}^*(d\u_{K'} | \Z\in A )\, d\u_{I\setminus K'},$$
and, in the case of the nonparametric bootstrap,
$$    \hat\rho^*_{K,K'}(A)  :=
    \int \psi(\u)\, \bar C_{n,K}^*(\u_K | \Z\in A )
    \bar C_{n,K'}^*(d\u_{K'} | \Z\in A )\, d\u_{I\setminus K'}.$$

Under the assumptions of Theorem~\ref{prop:exchangeable_bootstrap_copula} (resp. Theorem~\ref{prop:nonparametric_bootstrap_copula}) and those of Theorem~\ref{AN_generalized_extended}, the couple
$ \big( \Tc_{CvM}, \Tc_{CvM}^* \big)$ weakly converges to a couple of identically distributed vectors 
when $n$ tends to the infinity, using the exchangeable (resp. nonparametric) bootstrap.
And the same result applies to~$\Tc_{KS}$.

\mds

\section{Application to the dependence between financial returns}
\label{financial_returns}

The data that we are considering is made up of three European stock indices (the French CAC40, the German DAX Performance Index and the Dutch Amsterdam Exchange index called AEX),
two US stock indices (the Dow Jones Index and the Nasdaq Composite Index), the Japan Nikkei 225 Index, two oil prices (the Brent Crude Oil and the West Texas Intermediate called WTI) and the Treasury Yield 5 Years (denoted as Treasury5Y). 
These variables are observed daily from the 16th September 2008 (the day following Lehman's bankruptcy) to the 11th August 2020.
We compute the returns of all these variables. We realize an ARMA-GARCH filtering on each marginal return using the R package \texttt{fGarch}~\cite{fGarchpackage} and choosing the order which minimizes the BIC. The nine final variables $X_{t,i},$ $i=1, \dots, 9$ are defined as the innovations of these processes.

\mds

Each variable $X_i,$ $i=1, \dots, 9$, can play the role of the conditioning variable $Z$. 
When this is the case, we consider boxes determined by their quantiles. This yields nine boxes, defined as follows:
$A_{1,i} := [q^{X_i}_{0\%}, q^{X_i}_{5\%}]$,
$A_{2,i} := [q^{X_i}_{0\%}, q^{X_i}_{10\%}]$,
$A_{3,i} := [q^{X_i}_{0\%}, q^{X_i}_{20\%}]$,
$A_{4,i} := [q^{X_i}_{5\%}, q^{X_i}_{10\%}]$,
$A_{5,i} := [q^{X_i}_{20\%}, q^{X_i}_{80\%}]$,
$A_{6,i} := [q^{X_i}_{80\%}, q^{X_i}_{100\%}]$,
$A_{7,i} := [q^{X_i}_{90\%}, q^{X_i}_{100\%}]$,
$A_{8,i} := [q^{X_i}_{90\%}, q^{X_i}_{95\%}]$,
$A_{9,i} := [q^{X_i}_{95\%}, q^{X_i}_{100\%}]$.
In the following, we always consider conditioning by one variable only.

\mds

Our measure of conditional dependence will be (conditional) Kendall's tau. Because of the high number of triplets (i.e. couples $(X_i,X_j)$ given $X_k$ belongs to some subset), we do not consider every combination of conditioned and conditioning variable, but only report a few relevant ones.
Figure~\ref{fig:CKT_EuroIndexes} is devoted to the dependence between European indices. 
Figure~\ref{fig:CKT_US-EuroIndexes} is related to the dependence between European indices and the Dow Jones.
Figure~\ref{fig:CKT_US_Nikkei} deals with dependencies between the Dow Jones and the Nikkei indices.
Dependencies between US indices appear in Figure~\ref{fig:CKT_US_Indexes}. 
In all figures, the dotted line represents the unconditional Kendall's tau of the considered pair of variables.

\mds

Note that $[q^{X_i}_{0\%}, q^{X_i}_{100\%}] = A_{3,i} \sqcup A_{5,i} \sqcup A_{7,i}$, where $\sqcup$ denotes disjoint union. Nevertheless, the unconditional Kendall's tau $\tau_{X_1, X_2}$ cannot be decomposed (and then deduced) using only the conditional Kendall's taus $\tau_{X_1, X_2 | X_i \in A_{3,i}}$, $\tau_{X_1, X_2 | X_i \in A_{5,i}}$ and $\tau_{X_1, X_2 | X_i \in A_{7,i}}$. Indeed, 
\begin{eqnarray}
\lefteqn{    \tau_{X_1, X_2} = \int \Big(
    \1 \big( (x_{1,1}-x_{2,1})(x_{1,2}-x_{2,2}) > 0 \big)   \nonumber }\\
    &-& \1 \big( (x_{1,1}-x_{2,1})(x_{1,2}-x_{2,2}) < 0 \big) \Big) \, d\Pb_{\X}(\x_1) \, d\Pb_{\X}(\x_2).
    \label{eq:def:uncondKT}
\end{eqnarray}
Formally, we can decompose the probability measure $\Pb_{\X}(B)$ as
$ \sum_{k \in \{3,5,7\}} \Pb(\X\in B | X_i \in A_{k,i}) \Pb(X_i \in A_{k,i}) $ for any borelian $B$. 
Expanding in~(\ref{eq:def:uncondKT}), we indeed get terms such as the conditional Kendall's tau $\tau_{X_1, X_2 | X_i \in A_{k,i}}$, but also ``co-Kendall's taus'' that involve integrals with respect to some measures $\Pb(\cdot | X_i \in A_{k,i}) \otimes \Pb(\cdot | X_i \in A_{k',i})$, $k \neq k'$. Therefore, it is possible that all conditional Kendall's taus are strictly smaller (or larger) than the corresponding unconditional Kendall's tau. This is indeed the case for the couple $X_1 = CAC40$, $X_2 = AEX$ and $Z = DAX$ (see Figure~\ref{fig:CKT_EuroIndexes}).

\mds

Many interesting features appear on such figures. For instance, the levels of dependence between two European stock indices (CAC40 and AEX, e.g.) are significantly varying depending on another European index (say, DAX). At the opposite, they are globally insensitive to shocks on the main US index or on oil prices. 
This illustrates the maturity of the integration of European equity markets.
Note that the strength of such moves matters: dependencies given average shocks (when $Z$ belongs to $A_4$ or $A_8$) are generally smaller than those in the case of extreme moves (when $Z$ belongs to $A_1$ or $A_9$, e.g.). This is a rather general feature for most figures.
Moreover, dependencies are most often larger when the conditioning events are related to ``bad news'' (negative shocks on stocks, sudden jumps for interest rates), compared to ''good news'' (the opposite events): see Figure~\ref{fig:CKT_US_Indexes}, that refers to the couple (Dow Jones, Nasdaq).
When the pairs of stock returns are related to two different countries, dependence levels are globally smaller on average, but this does not preclude significant variations knowing another financial variable belongs to some range of values. Therefore, when Treasuries are strongly rising, the dependence between Dow Jones and Nikkei can become negative - an unusual value - although it is positive unconditionally.

\begin{figure}[htbp]
    \makebox[\textwidth][c]{\includegraphics[width=1.3\textwidth]{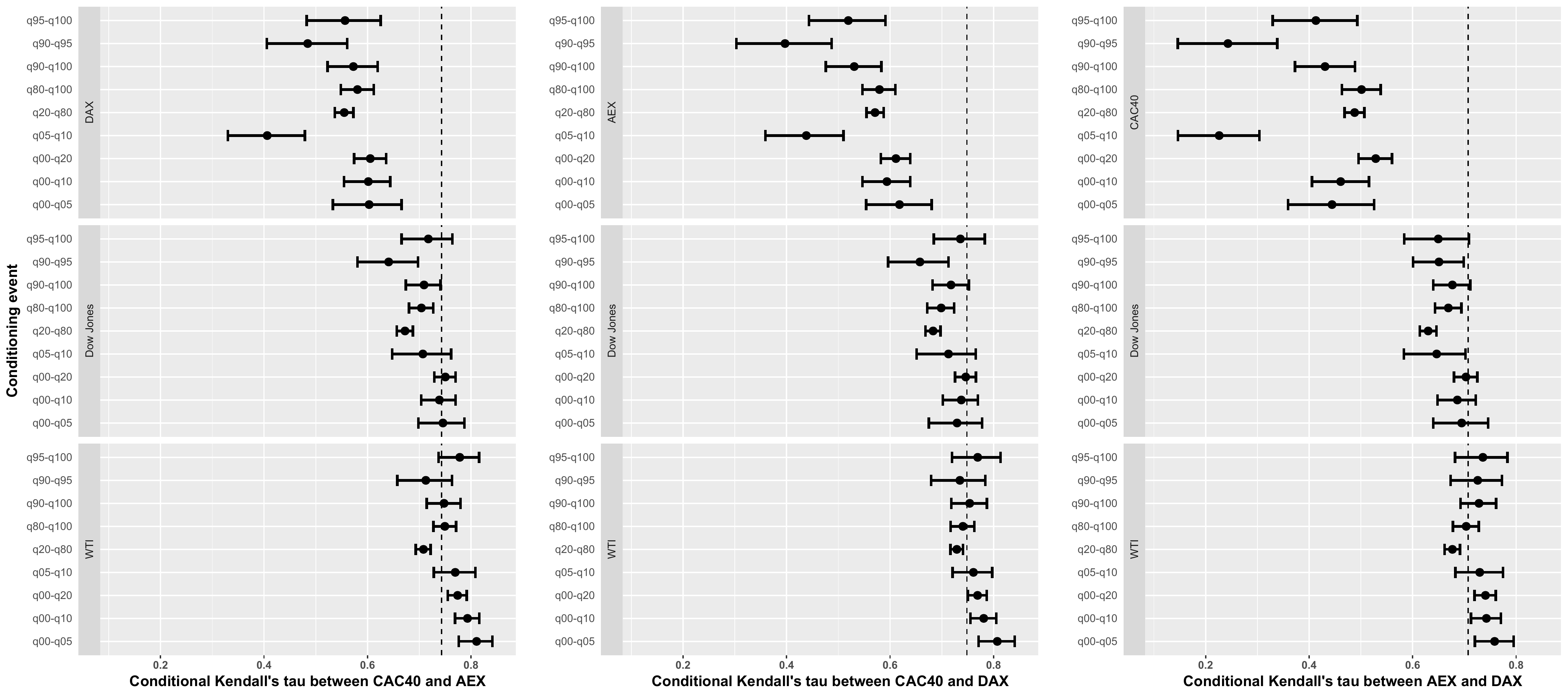}}
    \caption{Conditional Kendall's taus between pairs of European indexes.}
    \label{fig:CKT_EuroIndexes}
\end{figure}

\begin{figure}[htbp]
    \centering
    \includegraphics[width=0.9\textwidth]{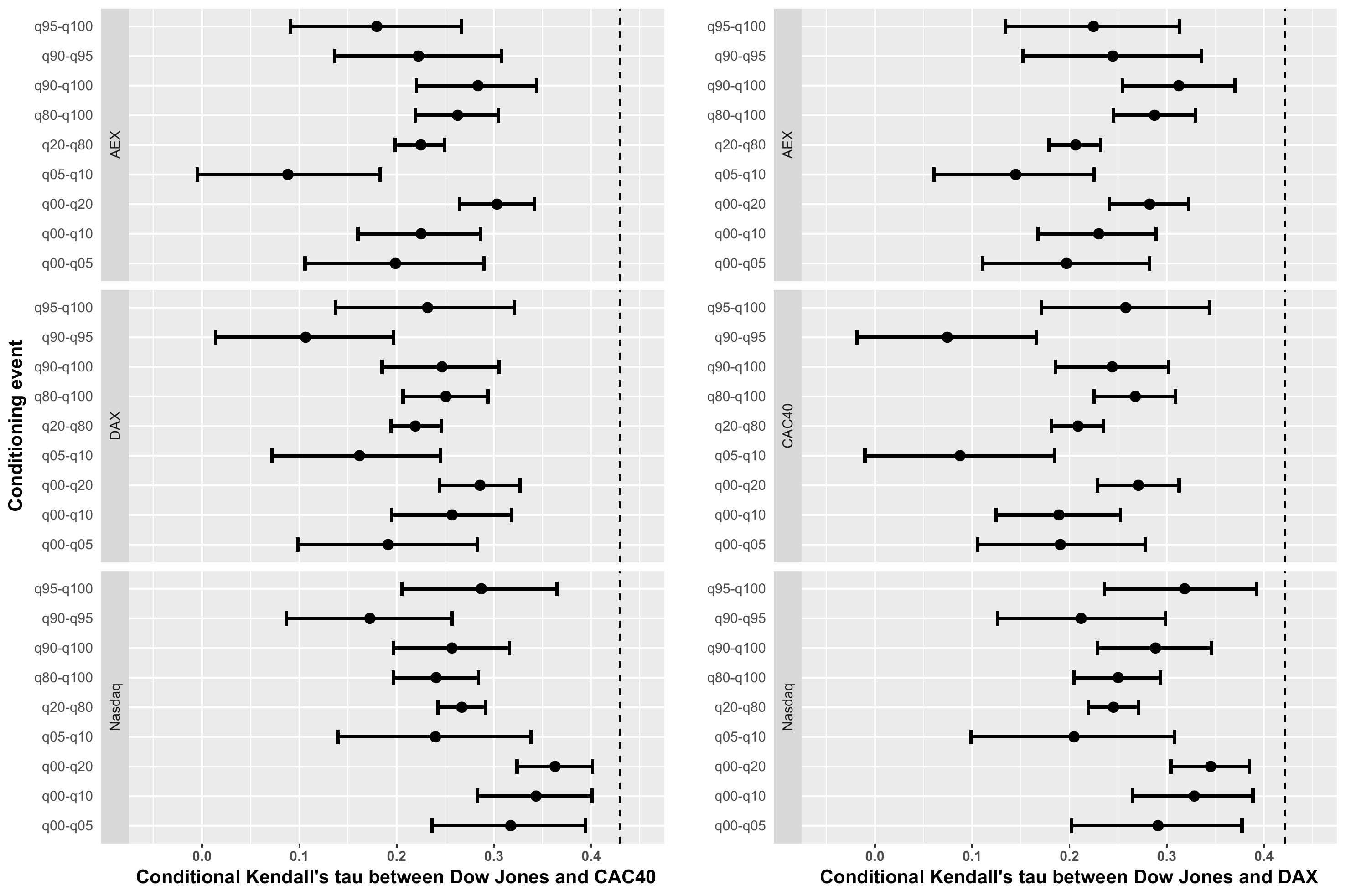}
    \caption{Conditional Kendall's tau between the Dow Jones Index and European indexes.}
    \label{fig:CKT_US-EuroIndexes}
\end{figure}

\begin{figure}[htbp]
    \centering
    \includegraphics[width=0.9\textwidth]{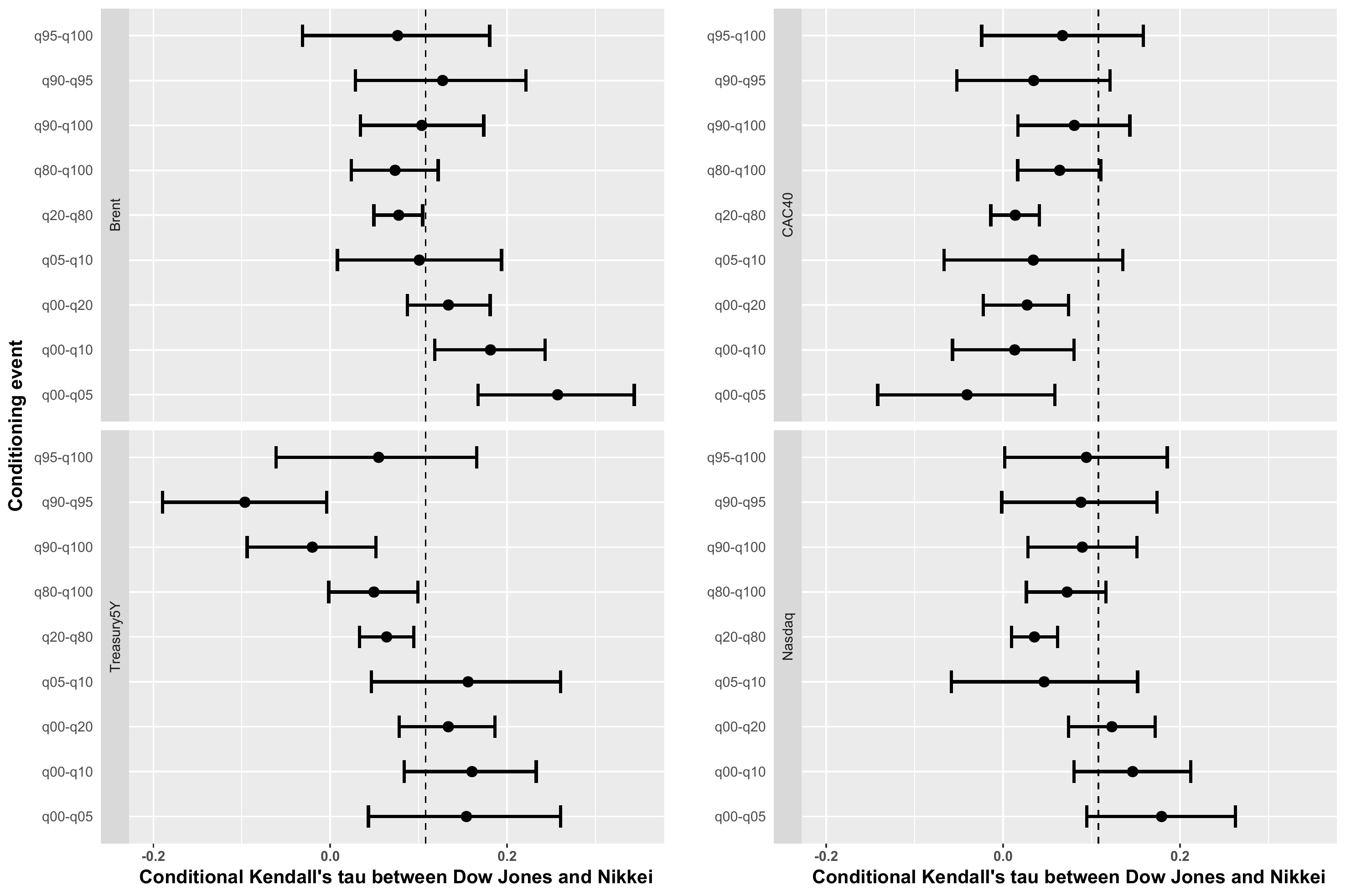}
    \caption{Conditional Kendall's tau between the Dow Jones Index and the Nikkei.}
    \label{fig:CKT_US_Nikkei}
\end{figure}

\begin{figure}[htbp]
    \centering
    \includegraphics[width=0.9\textwidth]{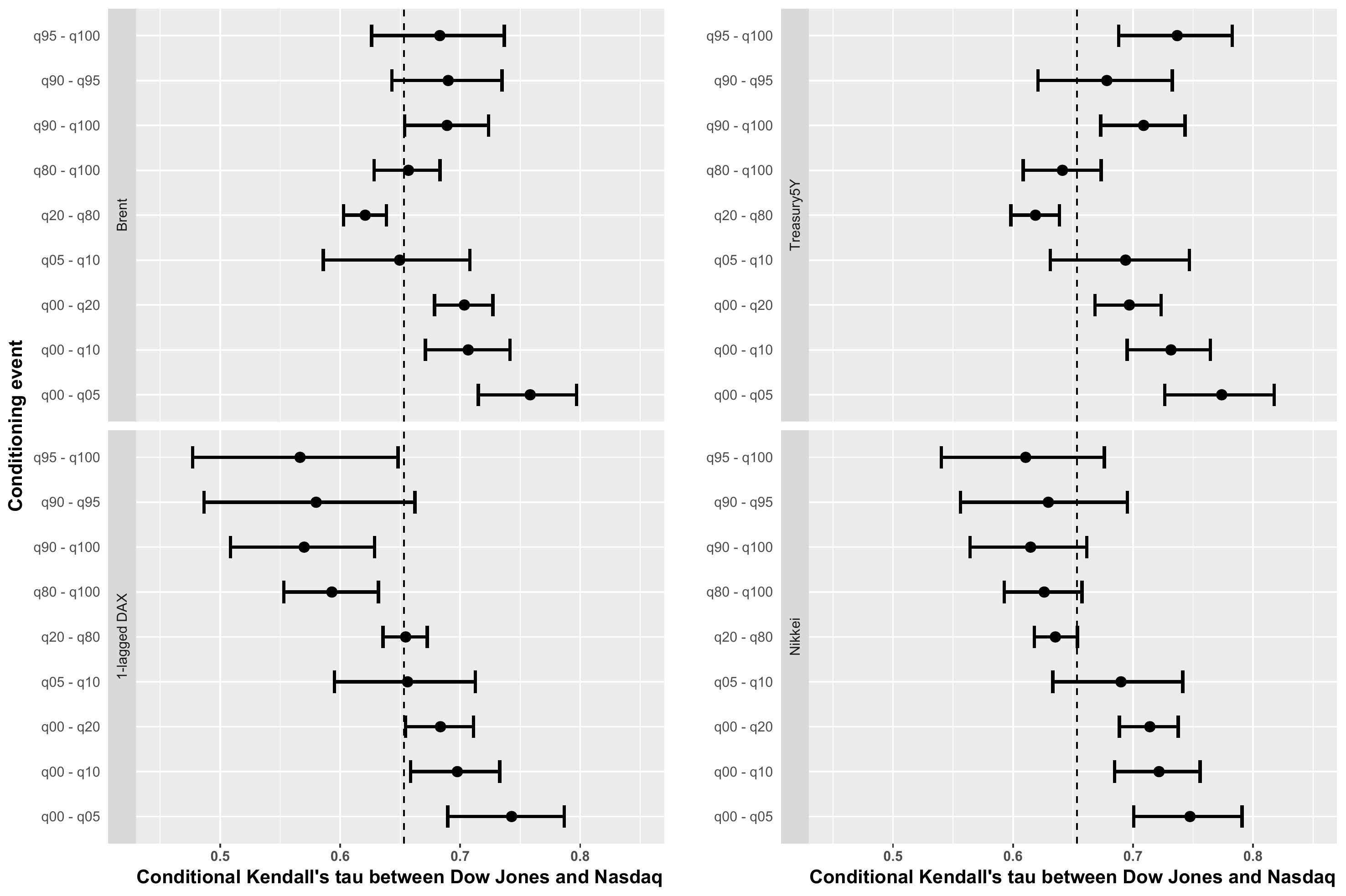}
    \caption{Conditional Kendall's tau between the Dow Jones Index and the Nasdaq Composite Index.}
    \label{fig:CKT_US_Indexes}
\end{figure}

\section{Conclusion}

We have made several contributions to the theory of the weak convergence of empirical copula processes, their associated bootstrap schemes and multivariate dependence measures.
Now, all these concepts and results are stated not only for usual copulas but for conditional copulas too, i.e. for the copula of $\X$ knowing that some vector of covariates $\Z$ (that may be equal to $\X$) belongs to one or several borelian subsets. We only require that the probabilities of the latter events are nonzero. Therefore, we do not deal with pointwise conditioning events as
$A=\{\Z=\z\}$ for some particular vector $\z$. But the main advantage of working with $\Z$-subsets instead of singletons is to avoid the curse of dimension that rapidly appears when the dimension of $\Z$ is larger than three.

\mds

Once we have proved the weak convergence of the conditional empirical copula process $\hat\Cb_n(\cdot |A)$, possibly with multiple borelian subsets $A_j$, the inference and testing of copula models becomes relatively easy. An interesting avenue for further research will be to use our results to build convenient discretizations of the covariate space (the space of our so-called random vectors $\Z$). There is a need to find efficient algorithms and statistical procedures to build a partition of $\Rb^q$ with borelian subsets $A_j$, so that the dependencies across the components of $\X$ are ``similar'' when $\Z$ belongs to one of theses subsets, but as different as possible from box to box: ``maximum homogeneity intra, maximum heterogeneity inter''. 
A constructive tree-based approach should be feasible, as proposed in~\cite{kurzspanhel2017} in the case of vine modeling.

\bigskip 

\noindent
{\bf Acknowledgements}
\noindent

\mds
Jean-David Fermanian's work has been supported by the labex Ecodec (reference project ANR-11-LABEX-0047).

\bigskip

\bigskip

\appendix

\section{Proofs}

\subsection{Proof of Theorem~\ref{WeakConvCopProcess}}
\label{sec:proof:WeakConvCopProcess}

Let us introduce the vector of (unobservable) empirical quantiles
$$ \v_n(\u):=\big( G_{n,1}^{-1}(u_1|A),\ldots, G_{n,p}^{-1}(u_p|A) \big).$$
Then, note that $\bar D_n(\u,A)=D_n(\v_n(\u),A)$. As in~\cite{segers}, let us decompose
\begin{eqnarray}
\lefteqn{\bar\Db_n(\u,A) = \sqrt{n}\big( \bar D_n-D \big)(\u,A)          \nonumber  }\\
&=&  \sqrt{n}\big\{ D_n(\v_n(\u),A) - D(\v_n(\u),A) \big\} + \sqrt{n}\big\{ D(\v_n(\u),A) - D(\u,A) \big\} \nonumber \\
&=&  \Db_n(\v_n(\u),A) + \sqrt{n}\big\{ D(\v_n(\u),A) - D(\u,A) \big\} .
\label{decompos_barDbn}
\end{eqnarray}
As a usual empirical process, $\Db_n(\cdot,A)$ weakly tends to a Gaussian process in $\ell^{\infty}([0,1]^p)$, here the Brownian bridge $\Bb(\cdot,A)$, defined in Corollary~\ref{cor_weak_convergence_Dn}.
In particular, it is equicontinuous.
Note that $n_A$ tends to the infinity a.s. when $n$ tends to the infinity.
Then, $\sup_{u \in [0,1]}|(G_{n,k})^{-1}(u|A) - u| $ tends to zero a.s. for every $k$, when $n$ (and then $n_A$) tends to the infinity.
Therefore, the equicontinuity of $\Db_n$ implies
\begin{equation}
    \sup_{\u\in [0,1]^p}
    \big| \Db_n(\v_n(\u),A) - \Db_n(\u,A) \big| \stackrel{p}{\longrightarrow} 0, \label{Equiv_Term1_Dbn}
 \end{equation}
when $n\rightarrow \infty$.

\mds
Moreover, fix $\u\in [0,1]^p$ and define $w(t)=\u + t\{ \v_n(\u)-\u\}$ for any $t\in [0,1]$. By the mean value theorem, there exists $t^*(\u)=:t^*\in [0,1]$ s.t.
$$ \sqrt{n}\big\{ D(\v_n(\u),A) - D(\u,A) \big\} = \sum_{k=1}^p \partial_k D(w(t^*),A) \sqrt{n}\big\{ G_{n,k}^{-1}(u_k| A) - u_k  \big\}. $$
The latter identity is true whatever the values of $\u\in [0,1]^p$, even if one of its components is zero (see the discussion in~\cite{segers}, p.769).
Denote by $\e_k$ the unit vector in $\Rb^p$ corresponding to the $k$-th component.
For every $\u\in[0,1]^p$ and $t\geq 0$, we have, with obvious notations,
\begin{align*}
    0 &\leq \big( D(\u + t\e_k, A)- D(\u,A) \big)/t
    = \Pb\big(\U_{-k}^A \leq \u_{-k}, U^A_k\in [u_k,u_k+t], \X_J\in A\big)/t \\
    &\leq \Pb \big(U^A_k\in [u_k,u_k+t], \X_J\in A\big)/t
    \leq \Pb \big(U^A_k\in [u_k,u_k+t]
    \,|\, \X_J\in A\big) p_A/t =p_A.
\end{align*}
We deduce
$$ \lim\sup_{t\rightarrow 0^+} 
\big| D(\u + t\e_k, A)- D(\u,A) \big|/t \leq p_A,$$
and then $\sup_{\u\in [0,1]^p} |\partial_k D(\u,A)| \leq 1$, when the latter partial derivative exists.

\mds
Due to the Bahadur-Kiefer theorems (see~\cite{shorackwellner}, chapter 15), it is known that
$$\sup_{u\in [0,1]} \big| \sqrt{n_A}(G_{n,k}^{-1}(u|A) - u) + \alpha_{n,k}(u|A) \big|=o_P(1),$$
for every $k$, when $n_A$ tends to the infinity. We deduce
\begin{equation*}
 \sup_{\u\in [0,1]^p}
 \bigg| \sqrt{n}\big\{ D(\v_n(\u),A) - D(\u,A) \big\} + \big(\frac{n}{n_A}\big)^{1/2}\sum_{k=1}^p
\partial_k D\big(\u + t^*\{ \v_n(\u)-\u\},A\big) \alpha_{n,k}(u_k|A)
\bigg| 
\end{equation*}
tends to zero in probability, as $n$ tends to the infinity.

\mds
By adapting the end of the proof of Proposition 3.1. in~\cite{segers}, we easily prove that
\begin{equation}
\sup_{\u\in [0,1]^p} \big| \partial_k D\big(\u + t^*\{ \v_n(\u)-\u\},A\big) - \partial_k D\big(\u,A\big) \big| \times |\alpha_{n,k}(u_k | A) | =o_P(1),
\label{residual1}
\end{equation}
Moreover, with obvious notations,
\begin{eqnarray}
 \alpha_{n,k}(u_k|A)&=&\sqrt{n_A}\Big(\frac{D_n\big((u_k,\1_{-k}),A\big)}{\hat p_A} - u_k \Big)
= \frac{\sqrt{n}}{\sqrt{\hat p_A}}\Big(D_n\big((u_k,\1_{-k}),A\big) - u_k \hat p_A\Big)  \nonumber \\
&=& \frac{\sqrt{n}}{\sqrt{\hat p_A}}\Big( D_n\big((u_k,\1_{-k}),A\big) - u_k p_A + u_k (p_A - \hat p_A)\Big) \nonumber \\
&=& \frac{1}{\sqrt{p_A}}\Big( \Db_n\big((u_k,\1_{-k}),A\big) - u_k \Db_n(\1,A)\Big) + o_P(1),
\label{decompos_alphank}
\end{eqnarray}
since $D\big((u_k,\1_{-k}),A\big)=u_k p_A$ for every $k$ and $u_k\in [0,1]$.
Equations~(\ref{residual1}) and~(\ref{decompos_alphank})  yield
\begin{equation}
 \sup_{\u\in [0,1]^p}
 \bigg| \sqrt{n}\big\{ D(\v_n(\u),A) - D(\u,A) \big\} +\sum_{k=1}^p
\frac{ \partial_k D\big(\u,A\big)}{p_A} \Big( \Db_n\big((u_k,\1_{-k}),A\big) - u_k \Db_n(\1,A)\Big) \bigg| \stackrel{p}{\longrightarrow} 0,
\label{Equiv_Term2_Dbn}
\end{equation}
when $n\rightarrow\infty$. Finally, Equations~(\ref{decompos_barDbn}),~(\ref{Equiv_Term1_Dbn}) and~(\ref{Equiv_Term2_Dbn}) conclude the proof.
$\Box$

\mds

\subsection{Proof of Theorem~\ref{validity_boot_Dcn}}
\label{proof:validity_boot_Dcn}

Let $\Pb_n$ be the empirical measure associated to $(X_i,Z_i)_{i=1,\ldots,n}$.
Set the weighted bootstrap empirical process
$\hat \Vb_n:=n^{-1/2}\sum_{i=1}^n\big(W_{n,i}-\bar W_n \big)\delta_{(\X_i,\Z_i)}$.
For every $\u\in [0,1]^p$ and $\y\in \Rb^p$, denote by $g_{n,\u}$, $g_{\u}$ and $g_{\y}$ the maps from $\Rb^{p}\times \Rb^q$ to $\Rb$ defined by
$$g_{n,\u}:(\x,\z) \mapsto \1\big( x_1 \leq F_{n,1}^{-1}(u_1 |A),\ldots,x_{p} \leq F_{n,p}^{-1}(u_p |A),\z\in A \big),$$
$$g_{\u}:(\x,\z) \mapsto \1\big( x_1 \leq F_{1}^{-1}(u_1 |A),\ldots,x_{p} \leq F_{p}^{-1}(u_p |A),\z\in A \big),$$
$$g_{\y}:(\x,\z) \mapsto \1\big( x_1 \leq y_1,\ldots,x_{p} \leq y_p,\z\in A\big).$$
The latter functions implicitly depend on the borelian subset $A$.
Set the classes of functions $\Gc:=\{g_{\u}:\u\in [0,1]^p\}$, $\Gc_n:=\{g_{n,\u}:\u\in [0,1]^p\}$ and
$\Gc_0:=\{g_{\y}:\y\in \Rb^p\}$.
Note that $\Gc_n$ and $\Gc$ are subsets of $\Gc_0$ and that
$\Db^*_n (\u,A)= \int g_{n,\u}(\x,\z) \hat \Vb_n(d\x,d\z)=\hat \Vb_n(g_{n,\u})$.
Moreover, with some usual change of variables, we have
\begin{eqnarray*}
\lefteqn{  \| g_{n,\u}- g_{\u} \|^2_{L_2(P)} = \int (g_{n,\u} - g_{\u})^2(\x,\z)\, \Pb_{(\X,\Z)}(d\x,d\z) }\\
&\leq &\sum_{k=1}^p \Pb\big(|X_k - F_k^{-1}(u_k|A)|\leq |F_{n,k}^{-1}(u_k|A) - F_k^{-1}(u_k|A)| \big).
\end{eqnarray*}
For every $k\in \{1,\ldots,p\}$, we have
$$\sup_{u\in [0,1]}|F_{n,k}^{-1}(u|A) - F_k^{-1}(u|A)| = \sup_{v\in [0,1]}|G_{n,k}^{-1}(v|A) - v|,$$ 
that tends to zero a.s. (see~\cite{shorackwellner}, Chapter 13). This yields.
$ \sup_{\u\in [0,1]^p } \| g_{n,\u}- g_{\u} \|^2_{L_2(P)} =o_P(1).$
Since the process $\hat\Vb_n$ is weakly convergent in $\ell^{\infty}(\Gc_0)$ (Theorem 3.6.13 in~\cite{VVW}), it is equicontinuous and then
$\sup_{\u}| \hat \Vb_n(g_{n,\u}) - \hat \Vb_n(g_{\u})| = o_{P}(1)$. Therefore, the weak limit of $\Db^*_n(\cdot,A)$ on $\ell^{\infty}([0,1]^p)$ is the same as
the weak limit of $\hat \Vb_n$ on $\ell^{\infty}(\Gc)$  (also denoted $\ell^{\infty}([0,1]^p)$).

\mds
Since $\Gc$ is Donsker, Theorem 3.6.13 in~\cite{VVW} yields
$$ \sup_{h \in BL_1(\Gc)} | \Eb_{\W}[h(\hat \Vb_n)]- \Eb [h\big(\Bb(\cdot,A)\big)]    | \stackrel{P*}{\longrightarrow} 0.$$
Here, $BL_1(\Gc)$ denotes the set of functions $h:\ell^{\infty}(\Gc) \rightarrow [0,1]$ s.t.
$|h(T_1) - h(T_2|\leq \sup_{f\in \Gc}| T_1(f) - T_2(f)|$.
Moreover, due to the weak convergence of $\Pb_n$ in $\ell^\infty(\Gc)$, 
$$ \sup_{h \in BL_1(\Gc)} | \Eb_{\W}[h(\Pb_n)]- \Eb [h\big(\Bb(\cdot,A)\big)]    | \stackrel{P*}{\longrightarrow} 0.$$
Thus, the $BL_1(\Gc)$ bounded Lipschitz metric between the measures $\hat \Vb_n$ and $\Pb_n$ tends to zero in outer probability. 
Apply Lemma 2.2 in~\cite{KojaBuecher} (equivalence between the items (b) and (c)) to obtain the stated result. $\Box$

\mds

\subsection{Proof of Lemma~\ref{Hadamard_diff_rho}}
\label{sec:proof:Hadamard_diff_rho}

Let $BV_{M,p}$ be the space of right-continuous functions that are of bounded variation on $[0,1]^p$ in the sense of Hardy-Krause (see~\cite{RWZ} and the references therein, for instance), and whose total variation is bounded by a constant $M>1$, as in~\cite{VVW}, Lemma 3.9.17.
We can define $\Psi_{K,K'}$ on the space $BV_{M,p}$ by Equation~(\ref{def_map_Psi}).
Let $(t_n)$ be a sequence of nonzero real numbers, $t_n\rightarrow 0$ when $n\rightarrow 0$.
Consider a sequence $(h_n)$ of functions from $[0,1]^p$ to $\Rb$ s.t. $C+ t_n h_n$ belongs to $BV_{M,p}$ and $h_n$ tends to a continuous function $h$ in sup-norm.
Let us expand
\begin{align*}
    &\big\{\Psi_{K,K'}(C+t_n h_n)-\Psi_{K,K'}(C) \big\}/t_n - \Psi_{K,K'}'(C)(h) \\
    &= \int  \psi(\u)\, (h_n - h)_K(\u_K ) C_{K'}(d\u_{K'} )\, d\u_{I\setminus K'} 
    + \int \psi(\u)\, C_{K}(\u_K ) (h_n - h)_{K'}(d\u_{K'} )\, d\u_{I\setminus K'} \\
    &+ t_n\int \psi(\u)\, (h_{n}-h)_K(\u_K ) h_{n,K'}(d\u_{K'} )\, d\u_{I\setminus K'} 
    + t_n\int \psi(\u)\, h_{K}(\u_K ) h_{n,K'}(d\u_{K'} )\, d\u_{I\setminus K'} \\
    &=: T_{n,1} + T_{n,2} + T_{n,3}+T_{n,4}.
\end{align*}
If $K=\emptyset$ then $T_{1,n}=0$. Otherwise, since $\| h_n - h\|_\infty$ tends to zero and
$$\int |\psi(\u)\, C_{K'}(d\u_{K'} ) |\, d\u_{I\setminus K'} < \|\psi\|_\infty 
\int | C_{K'}(d\u_{K'} ) |\, d\u_{I\setminus K'}  <+\infty,$$
we get $T_{1,n}=o(1)$.

\mds
Moreover, if $K'=\emptyset$, then $T_{2,n}=0$. Otherwise,
$T_{2,n}$ can always be defined by an integration by parts formula (Theorem 15 in~\cite{RWZ}) that involves a finite number of terms as
$$ \int (h_n - h)_{K'}(\u_{L}-,\a_{K'\setminus L}) \chi(d\u_{L}, \a_{K'\setminus L}),$$
for some subsets of indices $L\subset K'$ and some vectors $\a_{K'\setminus L}$, with obvious vectorial notations.
Since $\chi$ is of bounded variation and $\|h_n-h\|_\infty$ tends to zero, this yields $T_{2,n}=o(1)$.

\mds
If $K=\emptyset$, then $T_{3,n}=0$. Otherwise, note that the total variation of $C+t_n h_n$ is less than a constant $M$. Therefore, the total variation of $t_n h_n$ is less than $2M$. We deduce $T_{n,3}$ tends to zero when $n\rightarrow \infty$.

\mds
If $K'= \emptyset$, then $T_{4,n}=o(1)$ because $t_n\rightarrow 0$.
To tackle the last term when $K'\neq \emptyset$,
we use the uniform continuity of the function $\u\mapsto \psi(\u) h(\u_K)$ on the compact subset $[0,1]^p$, as in Lemma 1 in~\cite{gijbels2011}:
for every $\eps>0$, there exists a partition of $[0,1]^p$ with $q=q(\eps)$ disjoint hyper-rectangles $R_j=(\a,\b]$ such that the stepwise function
$ s_\eps(\u):=\sum_{j=1}^q c_j \1(\u\in R_j)$ satisfies
$$ \sup_{\u\in [0,1]^p} |  s_\eps(\u) - \psi(\u) h(\u_K)|<\eps.$$
Then, we obtain
\begin{eqnarray*}
\lefteqn{ |T_{4,n}|\leq
|t_n|\int | \psi(\u)\, h_{K}(\u_K ) - s_\eps(\u) | \times |h_{n,K'}(d\u_{K'})| \, d\u_{I\setminus K'}  }\\
&+&
 |t_n|\times | \sum_{j=1}^q c_j \int \1(\u\in R_j) h_n(d\u_{K'}) \, d\u_{I\setminus K'} |  \leq  2M \eps + |t_n|\times | \sum_{j=1}^q c_j  h_n\big((\a_{K'},\b_{K'}]\big) |.
\end{eqnarray*}
Note that $h_n$ is bounded by a constant because it is uniformly convergent to the continuous map $h$.
Therefore, $| \sum_{j=1}^q c_j  h_n(R_j) |\leq H  \sum_{j=1}^q |c_j|$ for some constant $H$.
Then, for a given $\eps>0$ and when $n$ is sufficiently large,
$|T_{4,n}|\leq (2M+1)\eps$. This means $T_{4,n}=o(1)$, concluding the proof. $\Box$

\section{Covariance function of $\Cb_\infty(\cdot|A)$}
\label{covariances}
For every $\u_1$ and $\u_2$ in $[0,1]^p$ and two borel subsets $A_1$ and $A_2$ in $\Rb^q$, denote 
$$ v_{1,2}(\u_1,\u_2):=\Eb\big[\Bb(\u_1,A_1) \Bb(\u_2,A_2) \big].$$
When $A_1=A_2$, this is the covariance function of the Gaussian process $\Bb(\cdot,A)$, as given in Corollary~\ref{cor_weak_convergence_Dn}.
In the general case, it is given in~(\ref{general_cov_B}).
The goal is here to calculate the covariance function of the limiting processes obtained in Theorem~\ref{weak_conv_emp_copula_process_sets} 
and~\ref{weak_conv_emp_copula_process_sets_vectorial}.
To lighten notations, simply write $v$ instead of $v_{1,2}$ and $p_k$ instead of $p_{A_k}$, $k\in \{1,2\}$.
By lengthly but simple calculations, we obtain 
\begin{eqnarray*}
\lefteqn{   \Eb\big[ \Cb_\infty(\u_1|A_1)  \Cb_\infty(\u_2|A_1)\big]= \frac{v(\u_1,\u_2)}{p_1 p_2}
+ \frac{D(\u_1,A_1)D(\u_2,A_2)}{p^1_1 p_2^2}v(\1,\1)   }\\
&+& \frac{1}{p_1^2 p_2^2} \sum_{k,l=1}^p 
\partial_k D(\u_1,A_1) \partial_l D(\u_2,A_2) 
\Big\{ v\big( (u_{1,k},\1_{-k}), (u_{2,l},\1_{-l})\big)
- u_{1,k} v\big( \1, (u_{2,l},\1_{-l})\big)   \\
&-& u_{2,l} v\big( (u_{1,k},\1_{-k}), \1\big) + u_{1,k}u_{2,l} v(\1,\1)  \Big\} \\
&-& \frac{1}{p^2_1 p_2} \sum_{k=1}^p  \partial_k D(\u_1,A_1)  \Big\{ v\big( (u_{1,k},\1_{-k}), \u_2\big) - u_{1,k} v\big( \1, \u_2\big) \Big\} \\
&-& \frac{1}{p^2_2 p_1} \sum_{l=1}^p  \partial_l D(\u_2,A_2) \Big\{ v\big(\u_1, (u_{2,l},\1_{-k})\big) - u_{2,l} v\big( \u_1,\1\big) \Big\} \\
&-& \frac{1}{p^2_1 p_2} D(\u_1,A)v(\1,\u_2) - \frac{1}{p^2_2 p_1}  D(\u_2,A)v_A(\u_1,\1)       \\
&+& \frac{1}{p^2_1 p_2^2} \sum_{k=1}^p \partial_k D(\u_1,A_1) D(\u_2,A_2) \Big\{ v((u_{1,k},\1_{-k}),\1) - u_{1,k} v(\1,\1)   \Big\}  \\
&+& \frac{1}{p^2_1 p_2^2} \sum_{l=1}^p \partial_l D(\u_2,A_2) D(\u_1,A_1) \Big\{ v(\1, (u_{2,l},\1_{-k})) - u_{2,l} v(\1,\1)   \Big\}.
\end{eqnarray*}

For a given subset $A_1=A_2=A$, we get the covariance map of the limiting process $\Cb_\infty(\u|A)$.
In this case, we denote $v_{1,2}=v_A$. Then, check that $v_A(\1,\1) = p_A(1-p_A)$, $v_A((u_{1,k},\1_{-k}),\1) = p_A(1-p_A)u_{1,k}$,
and $v_A(\1, (u_{2,l},\1_{-k}))= p_A(1-p_A)u_{2,l}$, for any indices $k,l$ in $\{1,\ldots,p\}$.
Moreover, when $k\neq l$,
$$ v_A\big( (u_{1,k},\1_{-k}), (u_{2,l},\1_{-l})\big)= p_A C_{(X_k,X_l)}\big( u_k,u_l |A \big) - p_A^2 u_k u_l, $$
and $ v_A\big( (u_{1,k},\1_{-k}), (u_{2,k},\1_{-k})\big)= p_A \min(u_{1,k},u_{2,k}) - p_A^2 u_{1,k}u_{2,k}.$
Finally, $ v_A\big( \u_1,\1\big) = C(\u_1 |A) p_A(1-p_A),$ and $ v_A\big( \1,\u_2 \big) = C(\u_2 |A) p_A(1-p_A)$.

\mds

When there is a single subset $A$, the covariance $ \Eb\big[ \Cb_\infty(\u_1|A)  \Cb_\infty(\u_2|A)\big]$ can be easily estimated by replacing every unknown term above by an empirical counterpart.
Therefore, $v_A(\u_1,\u_2)$ may be estimated by
$$ \hat v_A(\u_1,\u_2) := \hat p_A \hat C_n( \u_1 \wedge \u_2 | \Z\in A) - \hat p_A^2 
\hat C_n( \u_1 | \Z\in A)\hat C_n( \u_2 | \Z\in A) .$$
Moreover any quantity $D(\u,A)$ and $\partial_k D(\u,A)$ would be estimated by $\bar D_n(\u,A)$ and $\widehat{\partial_k D}(\u,A)$ 
(see Equation~(\ref{def_estimated_derivatives})) respectively.

\mds

With multiple subsets $A_1,\ldots,A_m$, the task is slightly more complex. Indeed, recall that  
$$    \Eb\big[\Bb(\u_{1},A_1) \Bb(\u_{2},A_{2}) \big]
= \Pb( \U^{A_1} \leq \u_1, \U^{A_{2}} \leq \u_2, \Z\in A_1\cap A_{2}) - D( \u_1, A_1) D( \u_2, A_2) .$$

Reasoning as in Section~\ref{weak_convergence_section_single}, we easily 
see that any quantity $\Pb( \U^{A_j} \leq \u_j, \U^{A_{k}} \leq \u_k, \Z\in A_j\cap A_{k})$ can be empirically estimated by 
$\bar D_n(\u_j,\u_k,A_j,A_k)$ defined as 
\begin{eqnarray*}
\lefteqn{ \frac{1}{n} \sum_{i=1}^n \1 \big( U^A_{i,1}\leq G_{n,1}^{-1}(u_{i,1}|A_j) \wedge G_{n,1}^{-1}(u_{k,1}|A_k),\ldots,   }\\
&&\ldots, U^A_{i,p}\leq G_{n,p}^{-1}(u_{j,p}|A_j)\wedge G_{n,p}^{-1}(u_{k,p}|A_k), \Z_i\in  A_j\cap A_{k} \big) \\
&=& \frac{1}{n} \sum_{i=1}^n \1 \big( X_{i,1}\leq F_{n,1}^{-1}(u_{j,1}|A_j) \wedge F_{n,1}^{-1}(u_{k,1}|A_k),\ldots, \\
&&\ldots, X_{i,p}\leq F_{n,p}^{-1}(u_{j,p}|A_j)\wedge F_{n,p}^{-1}(u_{k,p}|A_k), \Z_i\in  A_j\cap A_{k} \big),
\end{eqnarray*}
for every $(\u_j,\u_k)\in [0,1]^{2p}$ and every indices $j,k$ in $\{1,\ldots,m\}$.
When $A_j$ and $A_k$ are disjoint, as in the case of partitions, the latter quantity is simply zero.
In every case, $\Eb\big[\Bb(\u_{1},A_1) \Bb(\u_{2},A_{2}) \big]$ is consistently estimated 
by $\bar D_n(\u_1,\u_2,A_1,A_2) - \bar D_n( \u_1, A_1) \bar D_n( \u_2, A_2)$.

\end{document}